\documentclass[11pt,a4paper,reqno]{amsart}
\textheight=23cm
\textwidth=16cm
\voffset=-1.5cm
\hoffset=-1.5cm
\parskip=2mm

\usepackage{amssymb,amsmath,amsthm}
\usepackage[T1]{fontenc}

\newtheorem{hypo}{Hypothesis}

\newtheorem{prop}[hypo]{Proposition}

\newtheorem{thm}[hypo]{Theorem}

\newtheorem{lem}[hypo]{Lemma}

\newtheorem{coro}[hypo]{Corollary}

\newtheorem{exe}[hypo]{Examples}

\DeclareMathOperator{\cov}{Cov}

\def\A{\mathcal{A}}
\def\B{\mathcal{B}}
\def\C{\mathcal{C}}

\def\PP{\mathbb{P}}
\def\RR{\mathbb{R}}
\def\ZZ{\mathbb{Z}}
\def\CC{\mathbb{C}}

\begin{document}

\title{Quantitative recurrence in two-dimensional extended processes}
\date{\today}
\author{Françoise Pène and Benoît Saussol}
\address{Laboratoire de mathématiques de Brest, CNRS UMR 6205\\ 
6, avenue Victor Le Gorgeu, CS 93837, 29238 BREST Cedex 3, FRANCE}
\email{francoise.pene@univ-brest.fr}
\email{benoit.saussol@univ-brest.fr}

\begin{abstract}
Under some mild condition, a random walk in the plane is recurrent.
In particular each trajectory is dense, and a natural question
is how much time one needs to approach a given small neighborhood of the origin. 
We address this question in the case of some extended dynamical systems similar to planar random walks, including $\ZZ^2$-extension of hyperbolic dynamics.
We define a pointwise recurrence rate and relate it to the dimension of the process, and establish a convergence in distribution
of the rescaled return times near the origin. 
\end{abstract}
\maketitle

\section{Introduction}

\subsection{Motivation}

This work was partly motivated by the recurrence properties of the planar Lorentz process.  
Given an initial condition, say $x$, we thus know that the 
process will return back $\varepsilon$ close to 
its starting point $x$. A basic question is : \emph{when} ? 
For finite measure 
preserving dynamical systems this question has some 
deep relations to the Hausdorff dimension of the 
underlying invariant measure. Namely, if $\tau_\varepsilon(x)$ 
represents this time, in many situations 
\[
\tau_\varepsilon(x)\approx 
\frac{1}{\varepsilon^\mathrm{dim}}
\]
for typical points $x$, 
where $\mathrm{dim}$ is the Hausdorff dimension of the 
underlying invariant measure. This has been proved for example for interval maps \cite{stv} and rapidly mixing systems \cite{bs,saussol}.
Another type of results is the exponential distribution of rescaled return times and the lognormal fluctuations of the return times~\cite{Hirata,cgs}.

In this paper we are dealing with systems where the 
underlying 
natural measure is indeed infinite. 
This typically causes the return times to be non 
integrable, in 
contrast with the finite measure case.
However, the systems we are thinking about 
have in common the 
property that, in some sense, the behaviors at small 
scale and at 
large scale are independent. 
The large scale dynamics being some sort of recurrent random walk, and the small scale dynamics a finite measure preserving system.
Although our first motivation was Lorentz process, we will not mention it further in this paper. We instead provide some results related to processes which are in essence similar to it. The first case treated in Section~\ref{sec:toy} is a toy model designed to give the hint of the general case. Then, in Section~\ref{sec:prw} we briefly mention the case of planar random walks. Finally, in Section~\ref{sec:zx} we give a complete analysis of the quantitative behavior of return times in the case of $\ZZ^2$-extensions of subshifts of finite type.

\subsection{Description of the main result : $\ZZ^2$-extensions of subshifts of finite type}

In this study of the quantitative behavior of recurrence we choose to work with  $\ZZ^2$-extensions of hyperbolic dynamics. We emphasize that this dimension 2 is at the threshold between recurrent and non recurrent processes, since in higher dimension these processes are neither recurrent (except if degenerate).
It makes sense to show how our results behave with respect to the dimension.
For completeness, we call the non-extended system itself a $\ZZ^0$-extension. 
In this non-extended case,
the type of results we have in mind (see Table~\ref{tab:1}) have already been established
respectively by Ornstein and Weiss \cite{ow}, Hirata \cite{Hirata} and Collet, Galves and Schmitt~\cite{cgs}. 
The case of $\ZZ^2$-extension is completely done in Section~\ref{sec:zx}.
The case of $\ZZ^1$-extension can be derived easily following the technique used in the present paper. The essential difference is that the local limit theorem has the one-dimensional scaling in $\frac{1}{\sqrt{n}}$, instead of $\frac{1}{n}$ in the two-dimensional case.
The following table summarizes the different results as the dimension changes. 
The first line of results corresponds to Theorem~\ref{thm:cas3}, the second to Theorem~\ref{thm:cas3bis} and the third to Corollary~\ref{cor:cas3}.
We refer to Section~\ref{sec:zx} for precise statements.

\begin{table}[htbp]
	\centering
		\begin{tabular}{|l|c|c|c|c|}
		\hline
		&&&\\
		Dimension	& 
		$\ZZ^0$-extension & $\ZZ^1$-extension & $\ZZ^2$-extension \\
	&&&\\
		\hline
	&&&\\
		Scale &	$\displaystyle\lim_{\varepsilon\to0}\frac{\log\tau_\varepsilon}{-\log\varepsilon}=d$	&
		$\displaystyle\lim_{\varepsilon\to0} \frac{\log\tau_\varepsilon}{-\log\varepsilon}=\mathbf{2}d$ &
		$\displaystyle\lim_{\varepsilon\to0} \frac{\log\mathbf{log}\tau_\varepsilon}{-\log\varepsilon}=d$ \\
	&&&\\
		\hline
	&&&\\
		Local law &
		$\nu\bigl(\nu(B_\varepsilon)\tau_\varepsilon>t\bigr)\to e^{-t}$ &
		$\nu\bigl(\nu(B_\varepsilon)^\mathbf{2}\tau_\varepsilon>t\bigr)\to \frac{1}{1+\beta\sqrt{t}}$  &
		$\nu\bigl(\nu(B_\varepsilon)\mathbf{log}\tau_\varepsilon>t\bigr)\to \frac{1}{1+\beta t}$ \\
	&&&\\
		 \hline
	
		Lognormal&&& \\
		fluctuations &
		$\varepsilon^d\tau_\varepsilon$ &
		$\varepsilon^{\mathbf{2}d}\tau_\varepsilon$ &
		$\varepsilon^d\mathbf{log}\tau_\varepsilon$ \\
		&&&\\
		\hline
		\end{tabular}

	\caption{Recurrence for $\ZZ^k$-extensions. $B_\varepsilon$ denotes the ball of radius $\varepsilon$, $\nu$ is a Gibbs measure on the base and $d$ is the Hausdorff dimension of $\nu$.}
	\label{tab:1}
\end{table}

\section{A toy model in dimension two}\label{sec:toy}

We present a toy model designed to posses a lot of independence. It has the advantage of giving the right formulas with elementary proofs.

\subsection{Description of the model and statement of the results}

Let us consider two sequences of independent identically distributed 
random variables $(X_n)_{n\ge 1}$
and $(Y_n)_{n\ge 0}$ independent one from the other 
such that~:
\begin{itemize}
\item the random variable $X_1$ is uniformly 
distributed on $\{(1,0),(-1,0),(0,1),(0,-1)\}$;
\item the random variable $Y_0$ is uniformly 
distributed on $]0;1[^2$.
\end{itemize}
Let us notice that $S_n:=\sum_{k=1}^nX_k$ 
(with the convention $S_0:=0$)
is the symmetric random walk on $\ZZ^2$.
We study a kind of random walk $M_n$ on $\RR^2$ given by $M_n=S_n+Y_n$.

Another representation of our model could be the following. Let $S=\RR^d$ and consider the system
$\ZZ^2\times S$. Attached to each site $i\in\ZZ^2$ of the lattice, there is a local system which lives on $S$ and $\sigma_n$ is a i.i.d. sequence of $S$-valued random variable with some density $\rho$, independent of the $X_n$'s. Then we look at the random walk $(S_n,\sigma_n)$, thinking at $\sigma_n$ as a spin. 

We want to study the asymptotic behavior,
as $\varepsilon$ goes to zero, of the return time in 
the open ball $B(M_0,\varepsilon)$ 
of radius $\varepsilon$ centered at $x$ 
(for the euclidean metric).
Let 
$$\tau_\varepsilon:=\min\{m\ge1\colon |M_m-M_0|
    <\varepsilon\}.$$
Note that, for all $x\in[0;1[^2$, we have~: 
$\tau_\varepsilon=\min\{m\ge 1\colon
    \vert M_m - x\vert<\varepsilon \}$.
We will prove the following~:
\begin{thm}\label{thm:pointwise}
Almost surely, $\displaystyle\frac{\log\log 
\tau_\varepsilon}{-\log \varepsilon }$
converges to the dimension 2 of the Lebesgue measure 
on $\RR^2$ as $\varepsilon$ goes to zero.
\end{thm}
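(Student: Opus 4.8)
The plan is to prove the two matching bounds
$\limsup_{\varepsilon\to0}\frac{\log\log\tau_\varepsilon}{-\log\varepsilon}\le 2$ and
$\liminf_{\varepsilon\to0}\frac{\log\log\tau_\varepsilon}{-\log\varepsilon}\ge 2$
almost surely, and then pass from a sequence $\varepsilon_k\to0$ to the continuous limit by a monotonicity argument. Since $\tau_\varepsilon$ is nonincreasing in $\varepsilon$, it suffices to work along a geometric sequence $\varepsilon_k=\theta^k$ and show the bounds hold with any $2\pm\delta$ in place of $2$; the sandwiching between consecutive $\varepsilon_k$ only costs a factor $\theta$ inside a double logarithm, which is negligible.

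For the \textbf{upper bound} (i.e.\ $\tau_\varepsilon$ is not too large), the key observation is that $M_m-M_0=S_m+(Y_m-Y_0)$, and conditionally on the walk $S$, the increment $Y_m-Y_0$ is a fixed nondegenerate random variable with a bounded density. So whenever the random walk returns to the origin, $S_m=0$, one has $|M_m-M_0|=|Y_m-Y_0|$, which is $<\varepsilon$ with probability of order $\varepsilon^2$, independently across the (infinitely many, by recurrence) return times of $S$ to $0$ — using the independence of the $Y_n$'s from each other and from $S$. A Borel–Cantelli / second-moment argument along the successive return times of $S$ then shows that among the first $N$ returns to $0$ one succeeds in landing $\varepsilon$-close, once $N\gtrsim \varepsilon^{-2-\delta}$; translating return-number into time via the fact that the $n$-th return of a planar SRW to $0$ happens at time roughly $e^{cn}$ (the number of visits to $0$ up to time $t$ grows like $\log t$) gives $\tau_\varepsilon\le \exp(\varepsilon^{-2-\delta})$ eventually, hence the $\limsup\le 2$.

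For the \textbf{lower bound} (i.e.\ $\tau_\varepsilon$ is genuinely huge), I would estimate $\PP(\tau_\varepsilon\le T)$ from above. Using the local limit theorem for the planar SRW, $\PP(S_m=j)=O(1/m)$ uniformly, so $\PP(|M_m-M_0|<\varepsilon)=\sum_j \PP(S_m=j)\PP(|j+Y_m-Y_0|<\varepsilon)=O(\varepsilon^2/m)$ for each fixed $m\ge1$ (the $Y$'s contribute a factor $O(\varepsilon^2)$ since their difference has bounded density, and only $O(1)$ lattice points $j$ are within distance $\varepsilon+\sqrt2$ of a ball of radius $\varepsilon$). Summing over $m\le T$ gives $\PP(\tau_\varepsilon\le T)\le C\varepsilon^2\log T$. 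Choosing $T=T_\varepsilon=\exp(\varepsilon^{-2+\delta})$ makes this bound $\le C\varepsilon^{\delta}$, which is summable along $\varepsilon_k=\theta^k$; Borel–Cantelli then yields $\tau_{\varepsilon_k}>\exp(\varepsilon_k^{-2+\delta})$ for all large $k$, hence $\liminf\ge 2-\delta$, and letting $\delta\to0$ finishes it.

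The \textbf{main obstacle} is the upper bound: the union bound $\PP(\tau_\varepsilon\le T)\ge$ something is the wrong direction, so a first-moment argument cannot give it, and one really needs the independence structure to get a quantitative lower bound on the probability of an early return. Concretely, the delicate point is controlling the \emph{time} at which the $n$-th return of $S$ to $0$ occurs: the returns are recurrent but the inter-return times are heavy-tailed (non-integrable in the plane), so one must argue at the level of the number of visits $N_t:=\#\{m\le t: S_m=0\}$, which concentrates around $c\log t$, and combine this with a conditional second-moment (Paley–Zygmund) estimate over the spins $Y_n$ at those visiting times to ensure at least one spin lands in the $\varepsilon$-ball before time $\exp(\varepsilon^{-2-\delta})$. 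Everything else is routine: the LLT input is classical, and the passage to the continuous $\varepsilon$-limit is immediate from monotonicity.
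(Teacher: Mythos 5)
Your proposal is correct, and the upper bound is essentially the paper's argument: the paper formalizes your ``try a spin at each return of $S$ to the origin'' idea by writing $\tau_\varepsilon=R_{T_\varepsilon}$, where $R_n$ is the $n$-th return time of $S$ to $0$ and $T_\varepsilon=\min\{\ell\ge1:|Y_{R_\ell}-Y_0|<\varepsilon\}$. Since the spins are i.i.d.\ and independent of the walk, $T_\varepsilon$ is \emph{exactly} geometric with parameter $\pi\varepsilon^2$, so no Paley--Zygmund or second-moment argument is needed: Markov's inequality plus the geometric law give $\log T_\varepsilon/(-\log(\pi\varepsilon^2))\to1$ a.s.\ along a subsequence, and the paper's Lemma~\ref{lem:Rn} supplies the translation from return number to time. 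Where you genuinely differ is the lower bound: the paper keeps the same factorization and proves separately that $R_n\ge e^{n^{1-\alpha}}$ eventually a.s.\ (Borel--Cantelli using the Dvoretzky--Erd\"os tail $\PP(R_1>s)\sim\pi/\log s$ together with the i.i.d.\ structure of the inter-return times) and that $T_\varepsilon\ge(\pi\varepsilon^2)^{-1+\alpha}$, whereas you run a single first-moment union bound $\PP(\tau_\varepsilon\le T)\le\sum_{m\le T}\PP(S_m=0)\,\PP(|Y_m-Y_0|<\varepsilon)=O(\varepsilon^2\log T)$. Your route is the more robust one --- it is precisely the argument the paper itself uses for the $\liminf$ in the general planar random walk case (Theorem~\ref{thm:RWP}) --- and it replaces the tail asymptotics for $R_1$ by the local limit estimate $\PP(S_m=0)=O(1/m)$, which for the simple walk is elementary.

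One imprecision to repair in your upper bound: the a.s.\ statement ``the $n$-th return occurs at time roughly $e^{cn}$'', i.e.\ that the number of visits $N_t$ satisfies $N_t\sim c\log t$ almost surely, is not true at constant precision (the local time of the planar walk concentrates around $\tfrac1\pi\log t$ only in probability; almost surely it fluctuates by iterated-logarithm factors, by Erd\"os--Taylor). What you actually need is only the one-sided bound $R_n\le e^{n^{1+\alpha}}$ eventually a.s., which follows from Borel--Cantelli applied to $\PP(R_n-R_{n-1}>e^{n^{1+\alpha}})=\PP(R_1>e^{n^{1+\alpha}})\sim\pi n^{-1-\alpha}$; this is exactly how the paper proves Lemma~\ref{lem:Rn}, and any such polynomial correction in the exponent is washed out by the double logarithm.
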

\begin{thm}\label{thm:law}
For all $t\ge0$ we have~:
$$
\lim_{\varepsilon\rightarrow 0}\PP\left(
     \lambda(B(M_0,\varepsilon))
     \log \tau_\varepsilon\le t\right)=\frac{1}{\displaystyle 1+\frac\pi t} .
$$
\end{thm}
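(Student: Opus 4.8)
The plan is to reduce the statement to a statement about the planar random walk $S_n$ alone, by first disentangling the contribution of the spin variable $Y_n$. Since $|M_m - M_0| = |S_m + Y_m - Y_0| < \varepsilon$ forces $S_m$ to be small (indeed $|S_m| \le |Y_m - Y_0| + \varepsilon < 2$ for small $\varepsilon$), the only lattice sites that matter are $S_m \in \{0, \pm e_1, \pm e_2\}$, and for $\varepsilon$ small only $S_m = 0$ actually contributes a nonnegligible amount. Thus $\tau_\varepsilon$ is, up to negligible corrections, the first time $m$ with $S_m = 0$ and $|Y_m - Y_0| < \varepsilon$. Conditionally on the walk, the successive returns of $S$ to $0$ happen at a random sequence of times, and at each such time the spin $Y_m$ is a fresh uniform variable on $]0,1[^2$ independent of everything; the event $|Y_m - Y_0| < \varepsilon$ then has probability $\lambda(B(M_0,\varepsilon))$ conditionally on $Y_0$ (away from the boundary of the square, which has vanishing measure), independently across the return times.

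First I would set $p_\varepsilon := \lambda(B(M_0,\varepsilon)) = \pi \varepsilon^2 + o(\varepsilon^2)$ and let $R_1 < R_2 < \cdots$ be the successive return times of $S_n$ to the origin. Then $\tau_\varepsilon = R_N$ where $N$ is geometric with parameter $p_\varepsilon$ (the number of returns needed until the spin lands in the ball), independent of the sequence $(R_k)$. So $\log \tau_\varepsilon = \log R_N$. Next I would invoke the classical fact that for the recurrent planar simple random walk the return time $R_1$ has a heavy tail with $\PP(R_1 > n) \sim \frac{\pi}{\log n}$ (Dvoretzky–Erd\H{o}s type asymptotics), so that $\log R_k \approx \log R_1$ grows like... more precisely $\frac{\log R_k}{k}$ does not converge, but rather $R_k$ behaves like $e^{X/k}$-type scaling — the right statement is that $\frac{k}{\log R_k}$ converges in distribution, or more usefully, that $\frac{\log R_N}{\text{something}}$ stabilizes. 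The cleanest route: since the increments $\log R_{k+1} - \log R_k$ are not i.i.d. but $R_N$ with $N$ geometric of small parameter and $R_1$ having tail $\sim \pi/\log n$, one computes directly $\PP(\log R_N \le s)$.

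The core computation I would carry out is the following. Condition on $N = k$: then $\PP(\log R_k \le s \mid N = k) = \PP(R_k \le e^s)$. Using that $R_k$ is a sum of $k$ i.i.d. copies of $R_1$ and that $R_1$ is in the domain of attraction of a stable-type law with the very slowly varying tail $\pi/\log n$, one gets that $R_k$ is typically of order $e^{c k}$; quantitatively $\PP(R_k \le e^s) \approx (1 - \pi/s)^k$ for $s$ large (each of the $k$ independent returns must occur before time $e^s$, each with probability $\approx 1 - \pi/s$). Then summing over the geometric law of $N$,
\[
\PP\bigl(p_\varepsilon \log \tau_\varepsilon \le t\bigr) = \sum_{k \ge 1} p_\varepsilon (1-p_\varepsilon)^{k-1}\, \PP\bigl(\log R_k \le t/p_\varepsilon\bigr) \approx \sum_{k\ge1} p_\varepsilon (1-p_\varepsilon)^{k-1}\Bigl(1 - \frac{\pi p_\varepsilon}{t}\Bigr)^k,
\]
and this geometric sum evaluates, in the limit $\varepsilon \to 0$ (so $p_\varepsilon \to 0$), to $\dfrac{1}{1 + \pi/t}$, which is exactly the claimed limit.

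I expect the main obstacle to be making the heuristic $\PP(R_k \le e^s) \approx (1 - \pi/s)^k$ rigorous with uniform control over the relevant range of $k$ (namely $k$ of order $1/p_\varepsilon$, i.e.\ $k \asymp \varepsilon^{-2}$) and $s$ of order $t/p_\varepsilon \asymp t\varepsilon^{-2}$. This requires a precise large-deviation / local estimate for sums of i.i.d.\ random variables with a slowly varying $\pi/\log n$ tail — controlling both that no single $R_i$ is atypically huge (which is the dominant failure mode, by the one-big-jump principle) and that the bulk contributes negligibly. I would handle this by splitting $\{R_k \le e^s\} = \bigcap_i \{R_i \le e^s\}$ up to the rare event that two or more of the $R_i$ are comparable to $e^s$, bounding the latter by $\binom{k}{2}(\pi/s)^2 = O((k\pi/s)^2/2)$, which is $O(t^{-2})$ and hence harmless after the geometric averaging. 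The boundary effects from $Y_0$ near $\partial(]0,1[^2)$ and the lower-order lattice sites $S_m = \pm e_i$ contribute error terms that vanish as $\varepsilon \to 0$ and I would dispatch them at the outset.
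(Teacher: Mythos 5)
Your overall strategy is exactly the paper's: write $\tau_\varepsilon=R_{T_\varepsilon}$ with $T_\varepsilon$ geometric of parameter $\lambda(B(Y_0,\varepsilon))$ and independent of the successive return times $(R_k)$ of the lattice walk to the origin, approximate $\PP(\log R_k\le s)$ by $(1-\pi/s)^k$ using the i.i.d.\ increments $\Delta_i=R_i-R_{i-1}$ and the Dvoretzky--Erd\"os tail $\PP(R_1>u)\sim\pi/\log u$, and then evaluate the geometric sum. The upper-bound half, $\PP(R_k\le e^s)\le\PP(R_1\le e^s)^k$, is exact and is precisely what the paper uses.

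However, your proposed justification of the matching lower bound has a quantitative flaw that would sink the argument as written. You bound the discrepancy between $\{R_k\le e^s\}$ and $\bigcap_i\{\Delta_i\le e^s\}$ by the probability that two or more increments are comparable to $e^s$, i.e.\ by $\binom{k}{2}(\pi/s)^2=O(t^{-2})$, and call this harmless. It is not: for fixed $t$ this is a constant that does not vanish as $\varepsilon\to0$, whereas the theorem asserts the exact limit $1/(1+\pi/t)$, so an additive error of order $t^{-2}$ surviving the geometric averaging would change the answer. (Moreover the discrepancy event is not contained in the two-big-jumps event: the sum can exceed $e^s$ with a single increment close to $e^s$ and the others merely of order $e^s/k$.) The correct and much simpler route, which is the one the paper takes, is the inclusion $\{\max_i\Delta_i\le e^s/k\}\subseteq\{R_k\le e^s\}$, giving $\PP(R_k\le e^s)\ge\PP(R_1\le e^s/k)^k$; since $\PP(R_1>e^s/k)\sim\pi/(s-\log k)$ and $\log k=O(\log(1/\delta))=o(s)$ in the relevant range $k\le A/\delta$, $s=t/\delta$, this matches the upper bound after summing the truncated geometric series and letting $A\to\infty$. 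With that replacement your argument closes; the boundary and neighboring-lattice-site issues you raise at the outset are indeed negligible (the paper simply assumes $B(M_0,\varepsilon)\subset\,]0,1[^2$, which holds almost surely for $\varepsilon$ small).
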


\subsection{Proof of the pointwise convergence of the recurrence rate to the dimension} 

To simplify the exposition we suppose that $M_0$ is 
in $]0;1[^2$ and that 
$\varepsilon$ is so small that $B(x,\varepsilon)$ 
is contained in $[0;1[^2$.

First, let us define $R_{1}:=\min\{m\ge 1\colon
   S_m=0\}$.
According to \cite{DvoretskiErdos}, 
we know that we have~:
\begin{equation}\label{eq:R1}
\PP (R_1>s)\sim \frac{\pi}{\log s}
\quad \text{as $s$ goes to infinity}
\end{equation}
We then define for any $p\ge0$ the $p^{th}$ return 
time $R_p$ in $[0;1[^2$ by induction~:
$$R_{p+1}:=\inf\{m > R_{p}\ :\ S_m=0\}. $$
Observe that $R_p$ is the $p^{th}$ return time at the origin of the random walk $S_n$ on the lattice, thus the delays between successive return times $R_p-R_{p-1}$, setting $R_0=0$, are independent and identically distributed. Consequently~:
\begin{equation}\label{eq:F1}
\PP (R_p-R_{p-1}>s) = \PP(R_1>s)
\end{equation}

The proof of Theorem~\ref{thm:pointwise} follows from these two lemmas

\begin{lem}\label{lem:Rn}
Almost surely, $\displaystyle \frac{\log\log R_n}{\log n}\to 1$ as $n\to\infty$.
\end{lem}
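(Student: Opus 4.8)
The plan is to prove the statement $\frac{\log\log R_n}{\log n}\to 1$ almost surely by obtaining matching upper and lower bounds via Borel--Cantelli, exploiting the fact that $R_n$ is a sum of i.i.d.\ delays $D_p:=R_p-R_{p-1}$ with the heavy tail $\PP(D_p>s)\sim \pi/\log s$ coming from \eqref{eq:R1} and \eqref{eq:F1}. The key heuristic is that such a random variable has $\PP(D_1>s)$ decaying like $1/\log s$, so $D_1$ is ``typically of size $e^{cn}$'' in the sense that $\PP(D_1> e^{\lambda})\sim \pi/\lambda$; hence the maximum $\max_{p\le n} D_p$ is of order $e^{\pi n}$-ish, more precisely $\log\max_{p\le n}D_p$ grows linearly in $n$, and since $R_n\ge \max_{p\le n}D_p$ while $R_n\le n\max_{p\le n}D_p$, the double logarithm $\log\log R_n$ is governed by $\log(\text{linear in }n)=\log n$ up to lower order terms.

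For the lower bound I would fix $\delta>0$ and show $\PP(\log\log R_n \le (1-\delta)\log n \text{ i.o.})=0$. Since $R_n\ge D_n=R_n-R_{n-1}$, it suffices to control $\PP(\log D_n \le n^{1-\delta})=\PP(D_n\le e^{n^{1-\delta}})=1-\PP(D_n> e^{n^{1-\delta}})$, and by \eqref{eq:R1} this last probability is asymptotically $\pi/n^{1-\delta}$. This is not summable, so a naive Borel--Cantelli on a single index fails; instead I would use independence of the $D_p$ and look at a sparse but not too sparse subsequence, or rather observe that $\{D_p> e^{p^{1-\delta}}\}$ are independent events whose probabilities sum to infinity, so by the second Borel--Cantelli lemma infinitely many occur almost surely, forcing $\log\log R_p\ge (1-\delta)\log p$ infinitely often; to upgrade ``infinitely often'' to ``eventually'' I use monotonicity of $R_n$ in $n$ together with the fact that the gaps between successive successful indices $p$ grow only polynomially, so along the in-between indices $R_n$ has already been boosted. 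Concretely, if $D_p>e^{p^{1-\delta}}$ then for all $m\ge p$ we have $R_m\ge R_p\ge e^{p^{1-\delta}}$, and choosing the successful indices to be spaced by a factor $1+o(1)$ yields $\log\log R_n\ge (1-\delta')\log n$ for all large $n$.

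For the upper bound I would fix $\delta>0$ and show $\sum_n \PP(\log\log R_n > (1+\delta)\log n)<\infty$. Using $R_n\le n\max_{p\le n} D_p$ we get $\log\log R_n \le \log\log n + \log(1+\frac{\log\max_{p\le n}D_p}{\log n})$, so the event forces $\max_{p\le n}D_p> e^{n^{1+\delta'}}$ for some $\delta'>0$ and large $n$; by a union bound this has probability at most $n\,\PP(D_1>e^{n^{1+\delta'}})\sim \pi n / n^{1+\delta'}=\pi n^{-\delta'}$, which I would make summable by instead running the argument along a geometric subsequence $n_k=2^k$ and using monotonicity of $R_n$ to fill the gaps (standard, since $R_n$ is nondecreasing and $\log\log$ is slowly varying). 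Combining the two bounds and letting $\delta\to0$ through a countable sequence gives the almost sure limit.

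The main obstacle I anticipate is the lower bound: the tail $1/\log s$ is so heavy that the relevant probabilities are barely non-summable, so one cannot use first Borel--Cantelli on the natural events and must carefully exploit independence (second Borel--Cantelli) plus the monotonicity of $R_n$ to pass from ``infinitely often'' statements about the individual delays $D_p$ to an ``eventually'' statement about $R_n$; getting the subsequence spacing right so that no polynomial loss creeps into the $\log n$ normalization is the delicate point. The upper bound, by contrast, is a routine union bound plus monotonicity along a geometric subsequence.
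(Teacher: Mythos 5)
Your heuristic is exactly right: $R_n \ge \max_{p\le n}D_p$ and $R_n\le n\max_{p\le n}D_p$ with $D_p:=R_p-R_{p-1}$ i.i.d.\ and $\PP(D_1>s)\sim\pi/\log s$, so $\log\log R_n\approx\log n$. Your upper bound is correct and is essentially a union-bound variant of the paper's argument; the paper does it a touch more cleanly by noting directly that $\sum_n\PP(D_n>e^{n^{1+\alpha}})\sim\sum_n\pi n^{-(1+\alpha)}<\infty$, hence eventually $D_p\le e^{p^{1+\alpha}}$ for all $p$, giving $R_n\le R_N+\sum_{p\le n}e^{p^{1+\alpha}}\le ne^{n^{1+\alpha}}$ without a geometric subsequence.

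The genuine gap is in your lower bound. You switch from the first to the second Borel--Cantelli lemma on the events $E_p:=\{D_p>e^{p^{1-\delta}}\}$ and then assert that ``the gaps between successive successful indices grow only polynomially.'' That assertion is not a consequence of the second Borel--Cantelli lemma (which gives only ``infinitely often,'' with no control on the spacing), and you flag it yourself as ``the delicate point'' without supplying the proof. It can be made rigorous by bounding $\PP\bigl(\text{no $E_p$ occurs for }p\in[n^{1-\epsilon},n]\bigr)=\prod_{p\in[n^{1-\epsilon},n]}(1-\PP(E_p))\approx\exp(-c n^\delta)$ and applying the first Borel--Cantelli lemma to these ``gap'' events, but notice that this computation is essentially the same product bound the paper uses directly, so the detour through the second Borel--Cantelli lemma buys you nothing.

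Indeed your own heuristic already points to the clean route the paper takes but which you abandon in the formal argument: you note $R_n\ge\max_{p\le n}D_p$, yet in the proof you only use the weaker $R_n\ge D_n$. Using the max gives
\[
\PP\bigl(\log R_n\le n^{1-\alpha}\bigr)\le\PP\bigl(\forall p\le n,\ \log D_p\le n^{1-\alpha}\bigr)=\PP\bigl(\log D_1\le n^{1-\alpha}\bigr)^n\le\Bigl(1-\tfrac{\pi}{2n^{1-\alpha}}\Bigr)^n\le e^{-\pi n^\alpha/2},
\]
which is summable. The key trick is to compare all $n$ delays to the \emph{single} threshold $e^{n^{1-\alpha}}$ rather than each $D_p$ to its own threshold $e^{p^{1-\delta}}$; this converts a barely-non-summable individual probability into an exponentially small joint probability, and the first Borel--Cantelli lemma finishes the job with no spacing or density argument at all.
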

\begin{proof}
It suffices to prove that for any $0<\alpha<1$, almost surely, $e^{n^{1-\alpha}}\le R_n\le ne^{n^{1+\alpha}}$ provided $n$ is sufficiently large.
By independence and equation \eqref{eq:F1} we have
$$
\PP(\log R_n\le n^{1-\alpha})
\le\PP (\forall p\le n,\log (R_p-R_{p-1})\le n^{1-\alpha})
=\PP(\log R_1\le n^{1-\alpha})^n.
$$
According to the asymptotic formula~\eqref{eq:R1}, for $n$ sufficiently large
$$
\PP(\log R_1\le n^{1-\alpha})^n \le \left(1-{\frac{\pi}{2n^{1-\alpha}}}\right)^n
    \le e^{-{\pi \frac{n^\alpha}{2}}}.
$$
The first inequality follows then from Borel Cantelli lemma.

Moreover, according to formulas \eqref{eq:F1} and \eqref{eq:R1}, we have
$\sum_{n\ge 1}\PP (\log(R_n-R_{n-1})>n^{1+\alpha}))<+\infty $.
Hence, by Borel Cantelli lemma, we know that
almost surely, for $n$ sufficiently large, we have $R_n-R_{n-1}\le e^{n^{1+\alpha}}$.
{}From this we get the second inequality.
\end{proof}

Observe that $\tau_\varepsilon=R_{T_\varepsilon}$
with $T_{\varepsilon}:=\min\{\ell\ge 1\ :\ 
| Y_{R_\ell}-Y_0 | < \varepsilon \}$.
\begin{lem}\label{lem:Tepsilon}
Almost surely, $\frac{\log T_{\varepsilon}}{ -
\log \lambda(B(Y_0,\varepsilon))
} \to 1$ as $\varepsilon\to0$.
\end{lem}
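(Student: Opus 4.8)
The plan is to recognize $T_\varepsilon$ as the first success in a sequence of independent Bernoulli-type trials and apply a Borel--Cantelli argument in $\varepsilon$. Since $(Y_n)_{n\ge0}$ is i.i.d.\ uniform on $]0;1[^2$ and independent of $(X_n)_{n\ge1}$, the subsequence $(Y_{R_\ell})_{\ell\ge1}$ is, conditionally on $Y_0$ (or even unconditionally, since the $R_\ell$ are determined by the $X_n$'s which are independent of the $Y$'s), a sequence of i.i.d.\ uniform random variables on $]0;1[^2$, also independent of $Y_0$. Hence, writing $p_\varepsilon:=\lambda(B(Y_0,\varepsilon))$ (which for $\varepsilon$ small and $Y_0\in]0;1[^2$ equals $\pi\varepsilon^2$), the variable $T_\varepsilon$ is geometrically distributed with parameter $p_\varepsilon$ conditionally on $Y_0$: $\PP(T_\varepsilon>\ell\mid Y_0)=(1-p_\varepsilon)^\ell$. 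This gives exact control of the tails.

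Next I would turn this into an almost sure statement about $\varepsilon\to0$. Fix $\delta>0$. For the upper bound on $\log T_\varepsilon$, I would use that $\PP(T_\varepsilon>p_\varepsilon^{-(1+\delta)}\mid Y_0)=(1-p_\varepsilon)^{\lfloor p_\varepsilon^{-(1+\delta)}\rfloor}\le e^{-p_\varepsilon^{-\delta}/2}$ for $\varepsilon$ small; evaluating along a geometric subsequence $\varepsilon_k=\theta^k$ (so that $p_{\varepsilon_k}\sim \pi\theta^{2k}$ decays geometrically) makes $\sum_k \PP(T_{\varepsilon_k}>p_{\varepsilon_k}^{-(1+\delta)})$ summable by dominated convergence in $Y_0$, so Borel--Cantelli yields $\log T_{\varepsilon_k}\le (1+\delta)(-\log p_{\varepsilon_k})$ eventually, almost surely. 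For the lower bound, $\PP(T_\varepsilon\le p_\varepsilon^{-(1-\delta)}\mid Y_0)=1-(1-p_\varepsilon)^{\lfloor p_\varepsilon^{-(1-\delta)}\rfloor}\le p_\varepsilon^{\delta}$ (since $1-(1-p)^N\le Np$), which is again summable along $\varepsilon_k=\theta^k$, so almost surely $\log T_{\varepsilon_k}\ge (1-\delta)(-\log p_{\varepsilon_k})$ eventually. Finally I would fill the gaps between $\varepsilon_{k+1}$ and $\varepsilon_k$ using monotonicity: $T_\varepsilon$ is nondecreasing as $\varepsilon$ decreases and $p_\varepsilon$ is continuous and increasing in $\varepsilon$, with $p_{\varepsilon_{k}}/p_{\varepsilon_{k+1}}\to\theta^{-2}$, so choosing $\theta$ close to $1$ and then letting $\delta\to0$ along a countable sequence gives $\frac{\log T_\varepsilon}{-\log p_\varepsilon}\to1$ almost surely.

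The only mildly delicate point is that the parameter $p_\varepsilon=\lambda(B(Y_0,\varepsilon))$ is itself random (it depends on $Y_0$), so the Borel--Cantelli series have random terms; this is handled by first taking conditional expectations given $Y_0$, bounding the conditional probabilities by deterministic functions of $p_\varepsilon$ as above, and then integrating out $Y_0$ — since for any fixed $Y_0\in]0;1[^2$ one has $p_{\varepsilon}=\pi\varepsilon^2$ for $\varepsilon$ small enough (the ball stays inside the square), the bounds $e^{-p_\varepsilon^{-\delta}/2}$ and $p_\varepsilon^{\delta}$ are eventually deterministic and summable, so no integrability issue arises. I expect this subsequence-plus-monotonicity bookkeeping, rather than any probabilistic subtlety, to be the main thing to get right.
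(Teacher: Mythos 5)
Your proposal is correct and follows essentially the same route as the paper: identify $T_\varepsilon$ as geometric with parameter $\lambda(B(Y_0,\varepsilon))=\pi\varepsilon^2$, bound both tails (the paper uses Markov's inequality for the upper tail where you use the exact geometric tail, and the same $Np$ bound for the lower tail), then apply Borel--Cantelli along a subsequence and interpolate by monotonicity. The only cosmetic difference is your geometric subsequence $\theta^k$ versus the paper's polynomial $n^{-1/\alpha}$; both work since the quantity compared is a ratio of logarithms.
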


\begin{proof}
By independence of the $Y_\ell$, 
the random variable $T_\varepsilon$ has a geometric 
distribution
with parameter $\lambda_\varepsilon:=\lambda(B(Y_0,\varepsilon))
=\pi\varepsilon^2$.
For any $\alpha>0$ we have the simple decomposition
$$
\PP\left(\left\vert
\frac{ \log T_{\varepsilon}}{ -\log\lambda_\varepsilon}-1\right\vert>\alpha\right)=
\PP\left(T_{\varepsilon}>\lambda_\varepsilon^{-1-\alpha}\right)
+\PP\left(T_{\varepsilon}<\lambda_\varepsilon^{-1+\alpha}\right).\\
$$
The first term is directly handle by Markov inequality~:
$$\PP\left(T_{\varepsilon}>\lambda_\varepsilon^{-1-\alpha}\right)
    \le \lambda_\varepsilon^{\alpha},$$
while the second term may be computed using the geometric distribution ~:
\begin{eqnarray*}
{\Bbb P}(T_\varepsilon<\lambda_\varepsilon^{-1+\alpha})
&=& 1-(1-\lambda_\varepsilon)^{(\lambda_\varepsilon^{-1+\alpha})}\\
&=& 1-\exp\left[ \lambda_\varepsilon^{-1+\alpha}\log(1-\lambda_\varepsilon)\right]\\
&\le& -\lambda_\varepsilon^{-1+\alpha}\log(1-\lambda_\varepsilon)\\
&=& O(\lambda_\varepsilon^{\alpha}).
\end{eqnarray*}
Let us define $\varepsilon_n:= n^{-{1/\alpha}}$.
According to the Borel-Cantelli lemma, 
$ \lambda_{\varepsilon_n}
T_{\varepsilon_n}$
converges almost surely to the constant~1. 
The conclusion follows from the facts
that $(\varepsilon_n)_{n\ge 1}$ is a decreasing sequence
of real numbers satisfying $\lim_{n\rightarrow +\infty}\varepsilon_n=0$ and $\lim_{n\rightarrow +\infty}{\varepsilon_n\over\varepsilon_{n+1}}=1$,
and $T_\varepsilon$ is monotone in $\varepsilon$.
\end{proof}

\begin{proof}[Proof of Theorem~\ref{thm:pointwise}]
The theorem follows from Lemma~\ref{lem:Rn} and 
Lemma~\ref{lem:Tepsilon} since
$$
\frac{\log \log \tau_\varepsilon}{-\log\varepsilon}
=\frac{\log\log R_{T_\varepsilon}}
{\log T_\varepsilon} 
\frac{\log T_\varepsilon}{-\log\lambda_\varepsilon} 
\frac{\log\lambda_\varepsilon}{\log \varepsilon}
\to 1\times 1\times 2
$$
almost surely as $\varepsilon\to0$.
\end{proof}

\subsection{Proof of the convergence in distribution of the rescaled return time}

\begin{proof}[Proof of Theorem~\ref{thm:law}]
Let $t>0$. By independence of $T_\varepsilon$ and the $R_n$ we have 
$$
F_\varepsilon(t):=
\PP (\lambda(B(Y_0,\varepsilon)\log\tau_\varepsilon
    \le t)=
\sum_{n\ge 1}
\PP( T_\varepsilon=n)\PP\left(\log R_n \le 
  \frac t{\lambda_\varepsilon}\right).
$$
Since $T_\varepsilon$ has a geometric law 
with parameter $\lambda_\varepsilon$, 
$F_\varepsilon(t)$ is equal to $
G_{\lambda_\varepsilon}(t)$ with~:
$$
G_\delta(t) :=\sum_{n\ge1}\delta(1-\delta)^{n-1} \PP (\log R_n\le\frac t\delta).
$$
First, we notice that the independence of the successive returns gives for any $u>0$ that 
\[
\PP \left(R_n\le u\right)
\le
\PP \left(\max_{k=1,...,n} R_k-R_{k-1}\le u\right)
=
\PP\left(R_1\le u\right)^n
\]
Let $\alpha<1$. 
Using the inequality above and the equivalence relation~\eqref{eq:R1} we get that for any $\delta>0$ sufficiently small, 
\[
G_\delta(t) \le \sum_{n\ge1} \delta(1-\delta)^{n-1}\left(1-\alpha\frac{\pi\delta}{t}\right)^n
=\frac{1}{\displaystyle 1+\alpha\frac{\pi}{t}}+O(\delta).
\]
This implies that $\limsup_{\delta\to0} F_\epsilon(t)\le \frac{1}{1+\frac{\pi}{t}}$.

Fix $A>0$ and keeping the same notations observe that we have
$
F_\varepsilon(t) \ge H_{\lambda_\varepsilon
}(t)$
with~:
\[
H_\delta(t) := \sum_{1\le n\le A/\delta} \delta(1-\delta)^{n-1}\PP\left(\log R_n\le\frac{t}{\delta}\right).
\]
Note that the independence gives in addition that for any $u>0$
\[
\PP \left(R_n\le u\right)
\ge
\PP \left(\max_{k=1,...,n} R_k-R_{k-1}\le u/n\right)
=
\PP\left(R_1\le u/n\right)^n.
\]
Let $\alpha>1$.
Using the inequality above and the equivalence relation~\eqref{eq:R1} we get that for sufficiently small $\delta>0$
\[
H_\delta(t) 
\ge \sum_{1\le n\le A/\delta} \delta(1-\delta)^{n-1}\left(1-\alpha\frac{\pi}{\frac{t}{\delta}-\log n}\right)^n 
\ge \sum_{1\le n\le A/\delta} \delta(1-\delta)^{n-1}
\left(1-\alpha^2\frac{\pi\delta}{t}\right)^n.
\]
Evaluating the limit when $\delta\to0$ of the geometric sum and then letting $A\to\infty$ we end up with $\liminf_{\delta\to0}H_\delta(t)\ge \frac{1}{1+\frac{\pi}{t}}$.
\end{proof}

\section{Random walk on the plane}\label{sec:prw}

We now consider a true random walk on $\RR^2$, $M_n=X_1+\cdots+X_n$ where 
the $X_i$'s are i.i.d. random variables distributed with a law $\mu$ of 
zero mean, with invertible finite covariance matrix $\Sigma^2$ and characteristic function
$\hat\mu(t)=\int e^{it\cdot x}d\mu(x)$.
Let $\tau_\varepsilon$ be the minimal time for 
the walk to return 
in the $\varepsilon$-neighborhood of the origin~:
$$\tau_\varepsilon:=\min\{n\ge 1\colon \vert M_n\vert
    <\varepsilon\} .$$
\begin{thm}\label{thm:RWP}
Assume additionally that the distribution $\mu$ satisfies the Cramer condition 
\[
\limsup_{|t|\to\infty}|\hat\mu(t)|<1.
\]
Then almost surely $\lim_{\varepsilon\to0}\frac{\log\log\tau_\varepsilon}{-\log\varepsilon}=
2$.
\end{thm}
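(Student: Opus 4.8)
The plan is to transpose the proof of Theorem~\ref{thm:pointwise}, replacing the exact renewal structure of the toy model by a Markov renewal structure attached to the genuine walk. As there, $\varepsilon\mapsto\tau_\varepsilon$ is nondecreasing, and $\varepsilon_k/\varepsilon_{k+1}\to1$ when $\varepsilon_k=k^{-a}$; so it is enough to prove that for each $\alpha>0$ one can choose $a=a(\alpha)$ such that, almost surely and for $k$ large,
\[
\exp\bigl(\varepsilon_k^{-2+\alpha}\bigr)\ \le\ \tau_{\varepsilon_k}\ \le\ \exp\bigl(\varepsilon_k^{-2-\alpha}\bigr),
\]
and then let $\alpha\to0$ along a countable set and use monotonicity to recover the statement for all $\varepsilon$. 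Each inequality will come from the Borel--Cantelli lemma, so the task is to bound $\PP(\tau_\varepsilon<\exp(\varepsilon^{-2+\alpha}))$ and $\PP(\tau_\varepsilon>\exp(\varepsilon^{-2-\alpha}))$ by summable sequences along $(\varepsilon_k)$. The only probabilistic ingredient is the local limit theorem, which holds under the Cram\'er condition: for $n\ge n_0$ the law of $M_n$ has a density $p_n$ with $c/n\le p_n\le C/n$ on compact sets; in particular $\PP(|M_n-y|<\varepsilon)\le C\varepsilon^2/n$ uniformly in $y$ and in $n\ge n_0$.

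The lower bound on $\tau_\varepsilon$ is the easy half and needs only a union bound, just as one bounds $T_\varepsilon$ from below in the toy model. For $t=\exp(\varepsilon^{-2+\alpha})$,
\[
\PP(\tau_\varepsilon<t)\ \le\ \sum_{n=1}^{t}\PP(|M_n|<\varepsilon)\ \le\ \sum_{n<n_0}\PP(|M_n|<\varepsilon)+C\varepsilon^{2}\sum_{n=n_0}^{t}\frac1n\ =\ o_\varepsilon(1)+C\varepsilon^{2}\log t\ =\ O(\varepsilon^{\alpha}),
\]
the first sum tending to $0$ because we assume $\mu$ has no atom at the origin (this is what makes $\tau_\varepsilon\to\infty$). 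As $\varepsilon^{\alpha}$ is summable along $\varepsilon_k=k^{-2/\alpha}$, Borel--Cantelli yields $\tau_{\varepsilon_k}\ge\exp(\varepsilon_k^{-2+\alpha})$ eventually, hence $\liminf_{\varepsilon\to0}\frac{\log\log\tau_\varepsilon}{-\log\varepsilon}\ge2-\alpha$.

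The upper bound carries the weight of the argument and requires the analogues of Lemmas~\ref{lem:Rn} and~\ref{lem:Tepsilon}. Fix $L>1$ and define \emph{macroscopic excursions} $[\theta_{j-1},\theta_j]$ by $\theta_0=0$ and $\theta_j=\inf\{n>\theta_{j-1}:\ |M_n|\le1\ \text{and}\ \max_{\theta_{j-1}<k\le n}|M_k|\ge L\}$, so that each excursion leaves $\{|M|\ge L\}$ and returns to $\{|M|\le1\}$, and every positive instant at which the walk visits $\{|M|<\varepsilon\}$ lies inside some excursion. By the strong Markov property at the $\theta_j$'s, the triples $(\theta_j-\theta_{j-1},\ \mathbf{1}\{\text{excursion }j\text{ meets }\{|M|<\varepsilon\}\},\ M_{\theta_j})$ form a Markov chain driven by $M_{\theta_{j-1}}\in\{|M|\le1\}$; writing $T_\varepsilon$ for the index of the first excursion meeting $\{|M|<\varepsilon\}$ at a positive time, we have $\tau_\varepsilon\le\theta_{T_\varepsilon}$, and everything rests on two estimates uniform in the starting point $x\in\{|x|\le1\}$. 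First, $\sup_x\PP_x(\theta_1>s)\le C/\log s$ for large $s$, the analogue of~\eqref{eq:R1}: this follows from the (qualitative) fact that the walk leaves $\{|M|\le L\}$ quickly, together with a Dvoretzky--Erd\H{o}s--type bound $\PP_y(\text{first return to }\{|M|\le1\}>s)\lesssim\log(2+|y|)/\log s$ for the non-lattice walk, itself a consequence of the local limit theorem and classical potential theory; as in Lemma~\ref{lem:Rn} this gives $\theta_n\le\exp(n^{1+\alpha})$ eventually a.s. Second, $\inf_x\PP_x(\text{excursions }1,\dots,K_0\text{ meet }\{|M|<\varepsilon\})\ge c\varepsilon^{2}/\log\tfrac1\varepsilon$ for a fixed integer $K_0$, the analogue of the geometric estimate behind Lemma~\ref{lem:Tepsilon}: within a bounded number of excursions the walk spends, with probability bounded below, at least $n_0$ consecutive steps in a fixed neighbourhood of $0$, where by the local limit theorem its position has density $\asymp1$ and so lands in $\{|M|<\varepsilon\}$ with probability $\asymp\varepsilon^{2}$ (the exact power of $\log\tfrac1\varepsilon$ being immaterial). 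By the Markov property the second estimate makes $T_\varepsilon$ stochastically dominated by $K_0$ times a geometric variable of parameter $\asymp\varepsilon^{2}/\log\tfrac1\varepsilon$. Chaining the two bounds exactly as Theorem~\ref{thm:pointwise} is deduced from Lemmas~\ref{lem:Rn} and~\ref{lem:Tepsilon} --- with a block count $m\approx\varepsilon^{-2-\alpha/2}$ --- one gets $\PP(\tau_\varepsilon>\exp(\varepsilon^{-2-\alpha}))\le\PP(\theta_{K_0m}>\exp(\varepsilon^{-2-\alpha}))+\bigl(1-c\varepsilon^{2}/\log\tfrac1\varepsilon\bigr)^{m}$, both terms summable along a sufficiently sparse $\varepsilon_k=k^{-2/\alpha}$; whence $\limsup_{\varepsilon\to0}\frac{\log\log\tau_\varepsilon}{-\log\varepsilon}\le2+O(\alpha)$, and $\alpha\to0$ finishes.

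The main obstacle is these two uniform estimates. Unlike in the toy model, where returns to the origin are i.i.d., here one must control the Markov chain induced by the walk on $\{|x|\le1\}$, which can moreover dawdle inside the ball: the point is to show it is uniformly ergodic with an invariant density comparable to Lebesgue near $0$ --- so that the second estimate is uniform in $x$ --- and to obtain the uniform heavy-tail bound for a non-lattice walk, the Dvoretzky--Erd\H{o}s estimate~\eqref{eq:R1} being stated only for the lattice walk. Both inputs rely on the local limit theorem provided by the Cram\'er condition, and both are carried out in detail, in the closely parallel framework of $\ZZ^2$-extensions of subshifts of finite type, in Section~\ref{sec:zx}, of which the present statement is a streamlined model.
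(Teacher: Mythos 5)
Your lower-bound argument coincides with the paper's: by the uniform local limit theorem $\PP(|M_n|<\varepsilon)\lesssim\varepsilon^2/n$, so with $\varepsilon_n=\log^{-\alpha}n$ the tails are summable and Borel--Cantelli gives $\tau_{\varepsilon_n}>n$ eventually, hence $\liminf\ge 2$. (The side condition you impose, that $\mu$ has no atom at the origin, is already forced by the Cram\'er condition, so it is not an additional hypothesis.)

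For the upper bound you take a genuinely different, and considerably heavier, route. The paper does \emph{not} build a Markov renewal structure on $\{|x|\le1\}$. It applies the Dvoretzky--Erd\H os last-passage decomposition directly to the walk: with $A_k=\{|S_k|<\varepsilon,\ \forall p\in(k,n],\ |S_p-S_k|>2\varepsilon\}$ the $A_k$ are disjoint, so by independence of the increments
\[
1\ \ge\ \sum_{k\le n}\PP(A_k)\ =\ \sum_{k\le n}\PP(|S_k|<\varepsilon)\,\PP(\tau_{2\varepsilon}>n-k)\ \ge\ \PP(\tau_{2\varepsilon}>n)\sum_{k\le n}\PP(|S_k|<\varepsilon),
\]
and the local limit theorem then gives $\PP(\tau_{2\varepsilon}>n)\le C/(\varepsilon^2\log n)$. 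That single inequality, used along a sparse sequence $\varepsilon_p\to0$, is the whole upper bound. The only place the Cram\'er condition is used is inside the uniform two-sided local limit theorem (the preliminary lemma), whose error term $\exp(-an^{1-2\delta})/\varepsilon$ is precisely what makes the sum $\sum_{k\le n}\PP(|S_k|<\varepsilon)\gtrsim\varepsilon^2\log n$ valid uniformly.

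Your route instead transposes the toy-model decomposition $\tau_\varepsilon=R_{T_\varepsilon}$ to a Markov renewal chain of macroscopic excursions on $\{|x|\le1\}$, and it rests on two uniform estimates that you yourself flag as "the main obstacle" and then dispatch by appeal to "classical potential theory" and to Section~\ref{sec:zx}: (i) $\sup_{|x|\le1}\PP_x(\theta_1>s)\le C/\log s$, a continuous-state, uniform-in-$x$ analogue of \eqref{eq:R1}, and (ii) a uniform minorization/hitting bound for the induced chain on the unit disc. Neither is immediate from the LLT alone: \eqref{eq:R1} is a statement about the lattice walk's return to $0$, and extending it to excursion lengths of a non-lattice walk returning to a disc after reaching radius $L$, uniformly in the starting point, is genuine work; likewise the Harris-chain argument behind (ii). This machinery is of the same order as that carried out (for a different reason) in Section~\ref{sec:zx}, so your plan does not shortcut it. In short: the idea is correct in principle and would likely go through, but it replaces the paper's three-line last-passage inequality by a Markov-renewal construction whose two load-bearing lemmas you leave unproved. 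The paper's proof is the more economical one here; your approach is best thought of as a roadmap rather than a proof.
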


We remark that a kind of Cramer's condition on the law is necessary, since there exists some planar recurrent random walks for which the statement of the theorem is false (the return time being even larger than expected). We discovered after the completion of the proof of this theorem that its statement is contained in Theorem~2 of 
Cheliotis's recent paper~\cite{Cheliotis}. For completeness we describe the strategy of our original proof here but leave most details to the reader. A key point is a uniform version of the local limit theorem. 
Indeed we need an estimation of the type $\PP ( |S_n|< \varepsilon ) \sim \frac{c\varepsilon^2}{n}$,
with some uniformity in $\varepsilon$ (for some $c>0$). 
One can follow the classical proof of the
local limit theorem 
(see Theorem 10.17 of~\cite{Breiman}) to get the 
following~:
\begin{lem}
Let $\delta\in]0;1/2[$.
There exists $c_1>0$, $c_2>0$, $\varepsilon_0>0$, $a>0$ and an integer $N$,
such that, for any $n>N$, for any $\varepsilon\in]0;1[$ ~: 
%$\lim_{n\rightarrow +\infty}\kappa_n=0$
 $${c_1\varepsilon^2\over n}-{
    \exp(-an^{1-2\delta})\over\varepsilon}\le 
    \PP\left( S_n\in B(0,\varepsilon)\right)\le 
 {c_2\varepsilon^2\over n}+{
    \exp(-an^{1-2\delta})\over\varepsilon}. $$
\end{lem}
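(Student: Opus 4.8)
The plan is to follow the classical Fourier-analytic proof of the local limit theorem (as in Theorem 10.17 of \cite{Breiman}), but to keep careful track of the $\varepsilon$-dependence so that the error term comes out uniformly in $\varepsilon\in]0;1[$. Write $\mathbf 1_{B(0,\varepsilon)}$ for the indicator of the ball and let $g_\varepsilon$ denote a smooth compactly supported function sandwiching it from above and below, $g_\varepsilon^-\le \mathbf 1_{B(0,\varepsilon)}\le g_\varepsilon^+$, chosen so that $g_\varepsilon^\pm$ agree with the indicator up to a boundary layer of width a small constant times $\varepsilon$ and so that their Fourier transforms satisfy $|\widehat{g_\varepsilon^\pm}(t)|\le C\varepsilon^2\min(1,(\varepsilon|t|)^{-2})$ (a standard mollification of the indicator; the scaling $\varepsilon^2$ is just the volume). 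Then $\PP(S_n\in B(0,\varepsilon))$ lies between $\mathbb E[g_\varepsilon^\mp(S_n)]$, and by Fourier inversion $\mathbb E[g_\varepsilon^\pm(S_n)]=(2\pi)^{-2}\int_{\RR^2}\widehat{g_\varepsilon^\pm}(t)\,\hat\mu(t)^n\,dt$.

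Next I would split the integral over $\RR^2$ into the region $|t|\le \eta$ for a fixed small $\eta$ and its complement. On $|t|\le\eta$ one uses the quadratic expansion $\hat\mu(t)=1-\tfrac12 t\cdot\Sigma^2 t+o(|t|^2)$ to compare $\hat\mu(t)^n$ with the Gaussian $e^{-\frac n2 t\cdot\Sigma^2 t}$; after the change of variables $t\mapsto t/\sqrt n$ this produces the main term $\frac{c\varepsilon^2}{n}$ with $c$ expressed through $(2\pi\det\Sigma)^{-1}$ and the value $\widehat{g_\varepsilon^\pm}(0)/\varepsilon^2\to$ const, the factor $1/n$ coming from the Jacobian of the rescaling in dimension two. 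The discrepancy between $\hat\mu(t)^n$ and the Gaussian on $|t|\le\eta$ contributes something that is $o(\varepsilon^2/n)$ uniformly, since $|\widehat{g_\varepsilon^\pm}(t)|\le C\varepsilon^2$ pulls out the $\varepsilon^2$ and the remaining $t$-integral is the usual $o(1/n)$. On the region $|t|>\eta$ the Cramer condition $\limsup_{|t|\to\infty}|\hat\mu(t)|<1$ together with $|\hat\mu(t)|<1$ for $t\neq0$ gives $\sup_{|t|>\eta}|\hat\mu(t)|=:\rho<1$, hence $|\hat\mu(t)^n|\le\rho^{n}$; but this has to be weighed against the fact that $\widehat{g_\varepsilon^\pm}$ is not integrable with a bound better than $\|\widehat{g_\varepsilon^\pm}\|_{L^1}\asymp 1$ (mass concentrated at scale $|t|\asymp 1/\varepsilon$), which only yields a tail contribution of order $\rho^n$, not $\rho^n\varepsilon^2$. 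This is exactly why the stated bound carries the term $\exp(-an^{1-2\delta})/\varepsilon$ rather than $\exp(-an)$: one truncates the mollifier at frequency $|t|\le n^{\delta}$, so that the very high frequencies $|t|>n^\delta$ contribute at most $\|\widehat{g_\varepsilon^\pm}\|_{L^\infty}\cdot(\text{tail volume})\lesssim \varepsilon^2\cdot n^{-\delta}\cdot\ldots$ hmm — more simply, choosing the boundary layer of the mollifier to have width $\varepsilon n^{-\delta}$ forces $\|\widehat{g_\varepsilon^\pm}\|_{L^1}\lesssim \varepsilon^{-1}n^{\text{const}\cdot\delta}$ — and then on $\eta<|t|$ one gets $\rho^{n}\cdot\varepsilon^{-1}$ up to subexponential factors, absorbed into $\exp(-an^{1-2\delta})/\varepsilon$ after renaming constants. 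One then optimizes $\delta$ inside $]0;1/2[$ to make both the smoothing error (from the boundary layer of width $\varepsilon n^{-\delta}$, contributing $\varepsilon^2 n^{-\delta}/n$, which is fine) and the high-frequency error line up with the claimed exponent $1-2\delta$.

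The main obstacle, and the only place real care is needed, is making the error term genuinely \emph{uniform} in $\varepsilon$ down to the borderline behavior: a naive mollification gives Fourier tails decaying like $(\varepsilon|t|)^{-k}$ whose $L^1$ norm over $|t|>\eta$ behaves like $\varepsilon^{-1}\eta^{-(k-1)}$ regardless of $n$, so one must let the smoothing scale depend on $n$ (shrink the boundary layer like $\varepsilon n^{-\delta}$) to trade off the smoothing error against the Cramer tail, which is what produces the $n^{1-2\delta}$ in the exponential and the lone $1/\varepsilon$ prefactor. Everything else — the Gaussian comparison on $|t|\le\eta$, the evaluation of $c_1,c_2$, the choice of $N$ and $\varepsilon_0$ — is the standard local limit theorem bookkeeping and I would leave it to the reader as the paper suggests.
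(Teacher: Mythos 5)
The paper gives no proof of this lemma beyond pointing to the classical Fourier-analytic proof of the local limit theorem (Breiman, Theorem 10.17), and your sketch is exactly that argument with the $\varepsilon$-dependence tracked, so it is essentially the intended proof. Two minor points: the decay $(\varepsilon|t|)^{-2}$ is log-divergent in $L^1(\RR^2)$, so one needs a power $k>2$ (as you use later anyway); and the $n$-dependent smoothing scale is unnecessary, since a boundary layer of fixed width $c\varepsilon$ --- whose smoothing error is simply absorbed into the distinct constants $c_1<c_2$ --- already yields $\int_{|t|>\eta}|\widehat{g^\pm_\varepsilon}(t)|\,dt=O(\varepsilon^{-1})$ and hence a high-frequency contribution $O(\varepsilon^{-1}\sup_{|t|\ge\eta}|\hat\mu(t)|^{\,n})$, which is already stronger than the stated error term.
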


Then, this information on the probability of return is strong enough to estimate the first return time to the $\varepsilon$-neighborhood of the origin. 

\begin{proof}[Proof of Theorem \ref{thm:RWP}]
For any $\alpha>\frac12$, using $\varepsilon_n = 1/\log^{\alpha} n$, we get that
$\PP(|S_n|<\varepsilon_n)$ is summable. By the Borel Cantelli lemma, we have $\tau_{\varepsilon_n}(x)>n$ eventually almost surely. Thus
\[
\liminf_{n\to\infty}\frac{\log\log\tau_{\varepsilon_n}}{-\log\varepsilon_n}
\ge \liminf_{n\to\infty} \frac{\log\log n}{\log\log^\alpha n}=\frac1\alpha,
\]
which implies by monotonicity and the fact that $\alpha$ is arbitrary that $\liminf_{\epsilon\to0} \frac{\log\log\tau_\varepsilon}{-\log\varepsilon} \ge 2$.

Let $\alpha<{1\over 2}$.
To control the $\limsup$, we will take $n=n_\varepsilon=\lceil
 \varepsilon^{-{1\over\alpha}}\rceil$.
We use a similar decomposition to that of Dvoretski and Erd\"os in 
\cite{DvoretskiErdos}. Let 
$A_k=\{|S_k|<\varepsilon \text{ and } \forall p=k+1,\ldots,n, |S_p-S_k|>2\varepsilon\}$.
The $A_k$'s are disjoint, hence by independence and invariance,
and with our choice for $n$~:
\[
1 \ge \sum_{k=1}^n \PP(A_k) 
\ge \sum_{k=1}^n\PP(|S_k|<\varepsilon)\PP(\tau_{2\varepsilon}>n-k)
\ge \sum_{k=1}^n \PP(|S_k|<\varepsilon) \PP(\tau_{2\varepsilon}>n)
\ge \sum_{k=N}^n {c\varepsilon^2\over k} \PP(\tau_{2\varepsilon}>n).
\]
Hence we have $\PP(\tau_{\varepsilon}>n)\le \frac{1}{c\varepsilon^2\log n}
  \le c\varepsilon^{{1\over\alpha}-2}$,
if $n$ is large enough. 
Let $\varepsilon_p=p^{\frac{-2}{\alpha-2}}$. By Borel Cantelli lemma we have
$\log\tau_{\varepsilon_p}\le \varepsilon_p^{-\alpha}$ eventually almost surely, hence $\limsup_{p\to\infty}\frac{\log\log\tau_{\varepsilon_p}}{-\log\varepsilon_p}\le\alpha$.
By monotonicity and the fact that $\alpha$ is arbitrary we get the result.
\end{proof}

\section{Case of Euclidean extension of hyperbolic systems}\label{sec:zx}

\subsection{Description of the ${\Bbb Z}^2$-extension of a mixing subshift}

Let us fix a finite set $\A$ called alphabet.
Let us consider a matrix $M$ indexed by ${\A}\times{\A}$ with 0-1 entries.
We suppose that there exists a positive integer $n_0$ such that each component of 
$M^{n_0}$ is non zero.
We define the set of allowed sequences $\Sigma$ as follows
$$\Sigma:=\{\omega:=(\omega_n)_{n\in\Bbb Z}\ :\ 
\forall n\in{\Bbb Z},\ M(\omega_n,\omega_{n+1})=1\}. $$
We endow $\Sigma$ with the metric $d$ given by
$$d\left(\omega,\omega'\right):=
e^{-m},
$$
where $m$ is the greatest integer such that $\omega_i=\omega_i'$ whenever $|i|<m$.
We define the shift $\theta:\Sigma\rightarrow\Sigma$ by
$\theta\left((\omega_n)_{n\in\Bbb Z}\right)=(\omega_{n+1})_{n\in\Bbb Z}$.
For any function $f\colon \Sigma\to\RR$ we 
denote by $S_n f=\sum_{\ell=0}^{n-1} f
\circ\theta^\ell$ its ergodic sum.
Let us consider an Hölder continuous function $\varphi:\Sigma\rightarrow{\Bbb Z}^2$. 
We define the $\ZZ^2$-extension $F$ of the shift $\theta$ by 
\[
\begin{split}
F\colon \Sigma\times \ZZ^2 &\to \Sigma\times\ZZ^2\\
(x,m) &\mapsto (\theta x,m+\varphi(x)).
\end{split}
\]

\begin{figure}[h]\label{fig:extension}
\def\JPicScale{0.7}
\ifx\JPicScale\undefined\def\JPicScale{1}\fi
\unitlength \JPicScale mm
\begin{picture}(150,90)(0,0)
\linethickness{0.1mm}
\put(10,90){\line(1,0){20}}
\linethickness{0.1mm}
\put(10,70){\line(0,1){20}}
\linethickness{0.1mm}
\put(30,70){\line(0,1){20}}
\linethickness{0.1mm}
\put(10,70){\line(1,0){20}}
\linethickness{0.1mm}
\put(50,90){\line(1,0){20}}
\linethickness{0.1mm}
\put(50,70){\line(0,1){20}}
\linethickness{0.1mm}
\put(70,70){\line(0,1){20}}
\linethickness{0.1mm}
\put(50,70){\line(1,0){20}}
\linethickness{0.1mm}
\put(90,90){\line(1,0){20}}
\linethickness{0.1mm}
\put(90,70){\line(0,1){20}}
\linethickness{0.1mm}
\put(110,70){\line(0,1){20}}
\linethickness{0.1mm}
\put(90,70){\line(1,0){20}}
\linethickness{0.1mm}
\put(130,90){\line(1,0){20}}
\linethickness{0.1mm}
\put(130,70){\line(0,1){20}}
\linethickness{0.1mm}
\put(150,70){\line(0,1){20}}
\linethickness{0.1mm}
\put(130,70){\line(1,0){20}}
\linethickness{0.1mm}
\put(130,60){\line(1,0){20}}
\linethickness{0.1mm}
\put(130,40){\line(0,1){20}}
\linethickness{0.1mm}
\put(150,40){\line(0,1){20}}
\linethickness{0.1mm}
\put(130,40){\line(1,0){20}}
\linethickness{0.1mm}
\put(90,60){\line(1,0){20}}
\linethickness{0.1mm}
\put(90,40){\line(0,1){20}}
\linethickness{0.1mm}
\put(110,40){\line(0,1){20}}
\linethickness{0.1mm}
\put(90,40){\line(1,0){20}}
\linethickness{0.1mm}
\put(50,60){\line(1,0){20}}
\linethickness{0.1mm}
\put(50,40){\line(0,1){20}}
\linethickness{0.1mm}
\put(70,40){\line(0,1){20}}
\linethickness{0.1mm}
\put(50,40){\line(1,0){20}}
\linethickness{0.1mm}
\put(10,60){\line(1,0){20}}
\linethickness{0.1mm}
\put(10,40){\line(0,1){20}}
\linethickness{0.1mm}
\put(30,40){\line(0,1){20}}
\linethickness{0.1mm}
\put(10,40){\line(1,0){20}}
\linethickness{0.1mm}
\put(10,10){\line(1,0){20}}
\linethickness{0.1mm}
\put(10,10){\line(0,1){20}}
\linethickness{0.1mm}
\put(30,10){\line(0,1){20}}
\linethickness{0.1mm}
\put(10,30){\line(1,0){20}}
\linethickness{0.1mm}
\put(50,30){\line(1,0){20}}
\linethickness{0.1mm}
\put(50,10){\line(0,1){20}}
\linethickness{0.1mm}
\put(70,10){\line(0,1){20}}
\linethickness{0.1mm}
\put(50,10){\line(1,0){20}}
\linethickness{0.1mm}
\put(90,30){\line(1,0){20}}
\linethickness{0.1mm}
\put(90,10){\line(0,1){20}}
\linethickness{0.1mm}
\put(110,10){\line(0,1){20}}
\linethickness{0.1mm}
\put(90,10){\line(1,0){20}}
\linethickness{0.1mm}
\put(130,30){\line(1,0){20}}
\linethickness{0.1mm}
\put(130,10){\line(0,1){20}}
\linethickness{0.1mm}
\put(150,10){\line(0,1){20}}
\linethickness{0.1mm}
\put(130,10){\line(1,0){20}}
\linethickness{0.1mm}
\put(17.5,15){\circle{7.07}}

\linethickness{0.1mm}
\put(17.5,15){\circle{0.25}}

\linethickness{0.1mm}
\qbezier(17.5,15)(22.69,23.48)(27.83,29.82)
\qbezier(27.83,29.82)(32.98,36.16)(38.09,40.38)
\linethickness{0.1mm}
\qbezier(38.09,40.38)(43.2,44.59)(48.68,47)
\qbezier(48.68,47)(54.16,49.4)(60,50)
\linethickness{0.5mm}
\put(60,50){\line(1,0){0.12}}
\put(60.12,50){\vector(1,0){0.12}}
\linethickness{0.1mm}
\qbezier(60,50)(66.43,40.21)(74.76,33.21)
\qbezier(74.76,33.21)(83.08,26.21)(93.31,22)
\linethickness{0.1mm}
\qbezier(93.31,22)(103.54,17.79)(116.46,16.04)
\qbezier(116.46,16.04)(129.38,14.29)(145,15)
\linethickness{0.1mm}
\put(145,15){\line(1,0){0.12}}
\put(145.12,15){\vector(1,0){0.12}}
\linethickness{0.1mm}
\qbezier(145,15)(134.58,22.77)(126.36,30.86)
\qbezier(126.36,30.86)(118.14,38.96)(112.12,47.38)
\linethickness{0.1mm}
\qbezier(112.12,47.38)(106.1,55.8)(101.82,65.2)
\qbezier(101.82,65.2)(97.54,74.61)(95,85)
\linethickness{0.1mm}
\multiput(94.88,85.48)(0.12,-0.48){1}{\line(0,-1){0.48}}
\put(94.88,85.48){\vector(-1,4){0.12}}
\linethickness{0.1mm}
\qbezier(95,85)(101.49,83.04)(107.59,81.9)
\qbezier(107.59,81.9)(113.7,80.77)(119.41,80.47)
\linethickness{0.1mm}
\qbezier(119.41,80.47)(125.12,80.17)(130.9,80.68)
\qbezier(130.9,80.68)(136.67,81.18)(142.5,82.5)
\linethickness{0.1mm}
\multiput(142.5,82.5)(0.12,0.03){1}{\line(1,0){0.12}}
\put(142.62,82.53){\vector(4,1){0.12}}
\put(20,12.5){\makebox(0,0)[cc]{$(x,m)$}}

\put(62.5,52.5){\makebox(0,0)[cc]{$F(x,m)$}}

\put(20,12.5){\makebox(0,0)[cc]{}}

\put(145,20){\makebox(0,0)[cc]{$F^2(x,m)$}}

\put(92.5,87.5){\makebox(0,0)[cc]{$F^3(x,m)$}}

\put(145,82.5){\makebox(0,0)[cc]{$F^4(x,m)$}}

\put(90,87.5){\makebox(0,0)[cc]{}}

\put(35,35){\makebox(0,0)[cc]{}}

\put(35,35){\makebox(0,0)[cc]{$\varphi(x)=(1,1)$}}

\put(80,32.5){\makebox(0,0)[cc]{$\varphi(\theta x)=(2,-1)$}}

\end{picture}
\caption{Dynamics of the $\ZZ^2$-extension $F$ of the shift.}
\end{figure}
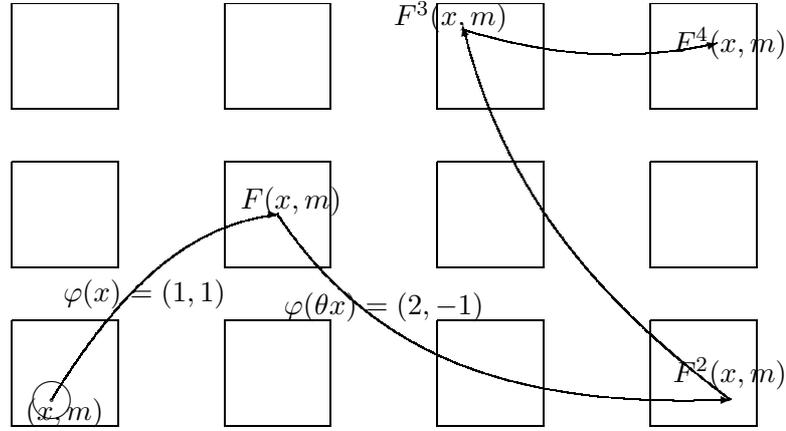

We want to know the time needed for a typical orbit starting at $(x,m)\in \Sigma\times \ZZ^2$ to return $\varepsilon$-close to the initial point after iterations of the map $F$.
By translation invariance we can assume that the orbit starts in the cell $m=0$.
More precisely, let
\[
\tau_\varepsilon(x)=\min\{n\ge1\colon F^n(x,0)\in B(x,\varepsilon)\times\{0\}\}.
\]
Observe that $F^n(x,m)=(\theta^nx,m+S_n\varphi(x))$, thus
\[
\tau_\varepsilon(x)=\min\{n\ge1\colon S_n\varphi(x)=0\text{ and } d(\theta^nx,x)<\varepsilon)\}.
\]

Let $\nu$ be the Gibbs measure associated to some Hölder continuous potential $h$, and denote by $\sigma^2_h$ the asymptotic variance of $h$ under the measure $\nu$. Recall that $\sigma^2_h$ vanishes if and only if $h$ is  cohomologous to a constant, and in this case $\nu$ is the unique measure of maximal entropy.

We know that there exists a positive integer $m_0$
such that the function $\varphi$ is constant on each $m_0$-cylinders.

Let us denote by $\sigma_\varphi^2$ the asymptotic covariance 
matrix of $\varphi$~:
$$\sigma_\varphi^2=\lim_{n\rightarrow +\infty}\cov_\nu\left(\frac{1}{\sqrt{n}}S_n\varphi\right).$$
{\bf We suppose that $\sigma_\varphi^2$ is invertible.} Note that if it is not the case then it means that range of $S_n\varphi$ is essentially contained in a one-dimensional lattice; in this case we can reduce our study to the corresponding $\ZZ$-extension.

{\bf We add the following hypothesis of non-arithmeticity on $\varphi$~:
We suppose that, for any $u\in[-\pi;\pi]^2\setminus\{
 (0,0)\}$ the only solutions $(\lambda,g)$, with $\lambda\in\CC$ and $g\colon\Sigma\to\CC$ measurable with $|g|=1$, of the functional equation
\[
g\circ\sigma \overline g = \lambda e^{iu\cdot\varphi}
\]
is the trivial one $\lambda=1$ and $g=const$.} Note that if it is not the case then it should mean that the range of $S_n\varphi$ is essentially contained in a sub-lattice; in this case we could just do a change of basis and apply our result to the new twisted $\ZZ^2$-extension.
We emphasize that this non-arithmeticity condition is equivalent to the fact that all the circle extensions $T_u$ defined by $T_u(x,t)=(\theta(x),t+u\cdot\varphi(x))$ are weakly-mixing for $u\in[-\pi;\pi]^2\setminus\{ (0,0)\}$.

In this context, we prove the following results~:
\begin{thm}\label{thm:cas3}
The sequence of random variables 
${\log\log\tau_\varepsilon\over-\log\varepsilon}$
converges almost surely as $\varepsilon\to0$ to the Hausdorff dimension $d$ of the measure $\nu$.
\end{thm}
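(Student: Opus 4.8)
The plan is to follow the architecture of the toy‑model proof of Theorem~\ref{thm:pointwise}, replacing the independence exploited there by mixing estimates coming from a local limit theorem for the $\ZZ^2$‑extension $F$. Write $R_1(x):=\min\{n\ge1:S_n\varphi(x)=0\}$ for the first return time of $F$ to the $0$‑cell, let $\hat\theta:=\theta^{R_1}$ be the induced map on $\Sigma$ (it preserves $\nu$ and is ergodic, $F$ being conservative ergodic under our hypotheses), and let $R_p$ be the $p$‑th return, so that $R_p-R_{p-1}=R_1\circ\hat\theta^{\,p-1}$ is stationary under $\nu$. For $\varepsilon=e^{-m}$ the ball $B(x,\varepsilon)$ is the cylinder $[\omega_{-(m-1)}\cdots\omega_{m-1}]$, and every $n$ with $S_n\varphi(x)=0$ and $d(\theta^nx,x)<\varepsilon$ is a return time of $\hat\theta$ to $B(x,\varepsilon)$; hence $\tau_\varepsilon(x)=R_{T_\varepsilon(x)}(x)$ with $T_\varepsilon(x):=\min\{\ell\ge1:\hat\theta^\ell x\in B(x,\varepsilon)\}$. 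Exactly as in the toy model,
\[
\frac{\log\log\tau_\varepsilon}{-\log\varepsilon}=\frac{\log\log R_{T_\varepsilon}}{\log T_\varepsilon}\cdot\frac{\log T_\varepsilon}{-\log\nu(B(x,\varepsilon))}\cdot\frac{\log\nu(B(x,\varepsilon))}{\log\varepsilon},
\]
so it suffices to prove, $\nu$‑a.e.: (i) $\frac{\log\log R_n}{\log n}\to1$ (analogue of Lemma~\ref{lem:Rn}); (ii) $\frac{\log T_\varepsilon}{-\log\nu(B(x,\varepsilon))}\to1$ (analogue of Lemma~\ref{lem:Tepsilon}); and (iii) $\frac{\log\nu(B(x,\varepsilon))}{\log\varepsilon}\to d$, together with $T_\varepsilon\to\infty$ as $\varepsilon\to0$ (which holds a.e. since $\nu$ is non‑atomic).

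Fact (iii) is classical: for a Gibbs measure on a subshift the local dimension exists $\nu$‑a.e. and equals its Hausdorff dimension $d$ (Shannon--McMillan--Breiman plus the Gibbs inequalities). The technical content is in (i) and (ii), and the common engine is a \emph{mixing local limit theorem} for $F$: $\nu(S_n\varphi=0)\sim c_0/n$, and more precisely $\nu\bigl(f\cdot\mathbf 1_{\{S_n\varphi=0\}}\cdot g\circ\theta^n\bigr)=\tfrac{c_0}{n}\nu(f)\nu(g)+o(1/n)$ for Hölder observables, with enough uniformity. One gets this by the standard spectral perturbation analysis of the twisted transfer operators $\mathcal L_u(h):=\mathcal L(e^{iu\cdot\varphi}h)$ on a space of Hölder functions, $u\in[-\pi,\pi]^2$: near $u=0$ there is a dominant simple eigenvalue $\lambda_u=1-\tfrac12\,u\cdot\sigma_\varphi^2u+o(|u|^2)$ (invertibility of $\sigma_\varphi^2$ enters here), while for $u\neq0$ the non‑arithmeticity hypothesis — equivalently, weak mixing of the circle extensions $T_u$ — rules out peripheral eigenvalues, whence $\mathrm{spr}(\mathcal L_u)<1$ uniformly on $\{u:|u|\ge\delta\}$; Fourier inversion in $u$ then yields the $1/n$ asymptotics. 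From the operator‑valued generating function $\sum_n z^n\mathcal L_n^{(0)}$, where $\mathcal L_n^{(0)}$ encodes the paths with $S_n\varphi=0$, one reads off by operator renewal theory and a Tauberian theorem the tail $\nu(R_1>n)\sim c_1/\log n$ (only the order is needed below); and applying the same LLT along finite sequences of times gives the moment bounds $E_\nu[N_s^q]\le C_q(\log s)^q$ for the local time $N_s:=\#\{1\le k\le s:S_k\varphi(x)=0\}$.

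Granting these, (i) follows as in Lemma~\ref{lem:Rn}: the bound $R_n\le n\,e^{n^{1+\alpha}}$ eventually uses only stationarity, since $\sum_n\nu\bigl(R_1\circ\hat\theta^{\,n-1}>e^{n^{1+\alpha}}\bigr)=\sum_n\nu\bigl(R_1>e^{n^{1+\alpha}}\bigr)=O\bigl(\sum_n n^{-1-\alpha}\bigr)<\infty$ and Borel--Cantelli; the bound $R_n\ge e^{n^{1-\alpha}}$ eventually is the assertion $N_s<n$ for $s=e^{n^{1-\alpha}}$, which follows from $\nu(N_s\ge n)\le E_\nu[N_s^q]/n^q\le C_q n^{-q\alpha}$ with $q$ a fixed integer exceeding $1/\alpha$, again summable. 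For (ii) one mimics Lemma~\ref{lem:Tepsilon} for $\hat\theta$: $\nu\bigl(T_\varepsilon>\nu(B(x,\varepsilon))^{-1-\alpha}\bigr)$ is controlled by Kac's formula $\int_{B_\varepsilon}T_\varepsilon\,d\nu=1$ and Markov, while $\nu\bigl(T_\varepsilon<\nu(B(x,\varepsilon))^{-1+\alpha}\bigr)$ needs a second‑moment estimate on the number of visits of $\hat\theta$ to $B(x,\varepsilon)$ before time $\nu(B_\varepsilon)^{-1+\alpha}$; summing over the cylinder partition $\mathcal P_m$ and over $m$ puts one in position to apply Borel--Cantelli along $\varepsilon_m=e^{-m}$, and monotonicity of $T_\varepsilon$ closes the argument.

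I expect the principal obstacle to be exactly point (ii). The return time $R_1$ has only a logarithmic tail, so $R_1\notin L^1(\nu)$ and the induced map $\hat\theta$ lies well outside the uniformly hyperbolic, rapidly mixing regime in which quantitative Poincaré recurrence is usually proved; one must therefore either establish the (polynomial suffices) decay of correlations for $\hat\theta$ underlying the second‑moment bound, or bypass $\hat\theta$ and estimate the diagonal returns $\{d(\theta^nx,x)<\varepsilon,\ S_n\varphi=0\}=\bigcup_{A\in\mathcal P_m}\bigl(A\cap\{S_n\varphi=0\}\cap\theta^{-n}A\bigr)$ directly from the twisted transfer operator, splitting the sum over $n$ into the range $n\lesssim\nu(B(x,\varepsilon))^{-1}$ (where only $\nu(A\cap\{S_n\varphi=0\}\cap\theta^{-n}A)\le\nu(A\cap\{S_n\varphi=0\})$ is available) and the range where the mixing LLT applies with error $o(\nu(A)^2)$. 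Establishing the local limit theorem for the $\ZZ^2$‑extension, and in particular the uniformity in $\varepsilon$ of the estimates it feeds, is the other substantial ingredient; the rest is a transcription of the toy‑model computation.
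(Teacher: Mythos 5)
Your factorization $\tau_\varepsilon=R_{T_\varepsilon}$ is correct, and fact~(iii) and the Borel--Cantelli scaffolding are exactly right, but the proposal has a genuine gap at the point you yourself flag as the ``principal obstacle'', namely~(ii). In the toy model, $T_\varepsilon$ was a geometric random variable because the $Y_{R_\ell}$'s were i.i.d.\ and independent of the $R_\ell$'s. Here $T_\varepsilon$ is the diagonal return time of the induced map $\hat\theta=\theta^{R_1}$ to the ball $B(x,\varepsilon)$, and there is no independence: the same symbolic coordinates of $x$ that determine whether $\theta^n x\in C_k(x)$ also determine $S_n\varphi(x)$. To run the analogue of Lemma~\ref{lem:Tepsilon} you would need a quantitative Poincar\'e-recurrence statement (essentially a second moment / short-range correlation bound) for $\hat\theta$, but $\hat\theta$ is the return map of a conservative \emph{infinite}-measure extension, its return time $R_1$ has only a logarithmic tail, and nothing in the hypotheses puts $\hat\theta$ in a class (Gibbs--Markov, uniformly hyperbolic, polynomially mixing) where such estimates are available. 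Your suggested bypass---``estimate the diagonal returns $\bigcup_{A\in\mathcal P_m}\bigl(A\cap\{S_n\varphi=0\}\cap\theta^{-n}A\bigr)$ directly from the twisted transfer operator''---is not an alternative repair of~(ii); it is abandoning the factorization and doing the whole argument on $\tau_\varepsilon$ directly, which is a different proof.

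That direct route is in fact what the paper does. Instead of separating the $\ZZ^2$-coordinate from the base, it keeps them coupled and proves a \emph{conditional} local limit theorem (Proposition~\ref{prop:pro2}) of the form $\nu\bigl(A\cap\{S_n\varphi=0\}\cap\theta^{-n}(\theta^k\Pi^{-1}B)\bigr)=\frac{\nu(A)\hat\nu(B)}{2\pi(n-k)\sqrt{\det\sigma_\varphi^2}}+O\bigl(\hat\nu(B)ke^{\eta q}(n-k)^{-3/2}\bigr)$ uniformly over cylinders $A$. With $A=B=C_{k_n}(x)$ this directly estimates $\nu(G_n(\varepsilon_n))$ for the $\liminf$, and the $\limsup$ is obtained via the Dvoretzky--Erd\H{o}s decomposition into the disjoint sets $A_\ell(\varepsilon)=G_\ell(\varepsilon)\cap\theta^{-\ell}\{\tau_\varepsilon>n-\ell\}$, to which the same conditional LLT applies. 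Restricting to cylinders in $\C_k^\delta$ (Shannon--McMillan--Breiman) supplies the $\nu(C_k(x))\approx e^{-kd}$ input that you package as~(iii). None of the facts about $R_1$, $R_n$, $T_\varepsilon$, or local-time moments that you would need for~(i) and~(ii) as separate lemmas are proved or used in the paper. So the structure you propose would need a substantial new ingredient (mixing of the induced map) that is absent, whereas the conditional LLT the paper proves makes the two-variable estimate in one stroke; if you want to salvage the toy-model narrative, the honest statement is that the paper's proof \emph{is} your ``bypass'', carried out.
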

\begin{thm}\label{thm:cas3bis}
The sequence of random variables
$\nu(B_\varepsilon(\cdot))\log\tau_{\varepsilon}(\cdot)$ converges in distribution as $\varepsilon\to0$ to a random
variable with distribution function of density $t\mapsto
   {\beta t\over 1+\beta t}{\bf 1}_{(0;+\infty)}(t)$, 
with $\beta:={1\over 2\pi\sqrt {\det\sigma_\varphi^2}}$.
\end{thm}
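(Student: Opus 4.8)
The plan is to reproduce, for the $\ZZ^2$-extension $F$, the architecture of the proof of Theorem~\ref{thm:law}, replacing the two special features of the toy model --- ``$R_1$ is the return time of a planar random walk'' and ``the $Y$'s are independent of the $X$'s'' --- by dynamical substitutes. First I would pass to the first-return map to the zero cell. Set $R_1(x)=\min\{n\ge1\colon S_n\varphi(x)=0\}$, let $\bar F=F^{R_1}$ be the return map of $F$ to $\Sigma\times\{0\}$, which preserves $\nu$ and is conservative ergodic (the $\ZZ^2$-cocycle $S_n\varphi$ being recurrent and the extension ergodic, thanks to the invertibility of $\sigma_\varphi^2$ and the non-arithmeticity hypothesis, by the classical criteria of Conze--Schmidt--Guivarc'h--Raugi), and put $R_\ell(x)=\sum_{j=0}^{\ell-1}R_1(\bar F^jx)$, the $\ell$-th time the orbit of $(x,0)$ returns to the zero cell. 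Since $\theta^{R_\ell(x)}x=\bar F^\ell x$, one has the exact identity $\tau_\varepsilon(x)=R_{T_\varepsilon(x)}(x)$ with $T_\varepsilon(x)=\min\{\ell\ge1\colon d(\bar F^\ell x,x)<\varepsilon\}$ the first return time of $x$ to its own ball $B_\varepsilon(x)$ under $\bar F$ --- precisely the decomposition of the toy model, the two new features being that the excursion times $R_\ell-R_{\ell-1}=R_1\circ\bar F^{\ell-1}$ are only stationary (not i.i.d.) and that $T_\varepsilon$ is not independent of the family $(R_\ell)$.

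Second, I would collect the two analytic inputs. (i) A local limit theorem for $S_n\varphi$: via the Nagaev--Guivarc'h perturbation of the twisted transfer operators $\mathcal L_ug=\mathcal L(e^{iu\cdot\varphi}g)$ on Hölder functions, the non-arithmeticity hypothesis forces spectral radius strictly less than $1$ for $u\in[-\pi;\pi]^2\setminus\{(0,0)\}$ and a simple dominant eigenvalue $\lambda(u)=1-\tfrac12\langle u,\sigma_\varphi^2u\rangle+o(|u|^2)$ near the origin, so that Fourier inversion gives $\nu(S_n\varphi=0)\sim\beta/n$, together with the mixing refinement $\nu\bigl(g\cdot\mathbf 1_{\{S_n\varphi=0\}}\cdot h\circ\theta^n\bigr)\sim\tfrac\beta n\nu(g)\nu(h)$ uniformly over bounded families of Hölder $g,h$. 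Summing, $\sum_{k\le n}\nu(S_k\varphi=0)\sim\beta\log n$, and a standard generating-function/Tauberian argument then yields $\nu(R_1>s)\sim\tfrac1{\beta\log s}$, equivalently $\nu(\log R_1>v)\sim\tfrac1{\beta v}$ --- the exact analogue of \eqref{eq:R1}, with $\pi$ replaced by $1/\beta$. (ii) A return-time estimate for $\bar F$: refining the cylinders by the value of $R_1$ endows $(\Sigma,\bar F,\nu)$ with a Markov structure to which the Hirata--Collet--Galves--Schmitt theory of return times to small balls applies, giving that $\nu\bigl(\nu(B_\varepsilon(\cdot))T_\varepsilon(\cdot)\le s\bigr)\to 1-e^{-s}$ and, for $\nu$-typical $x$ and uniformly over the relevant range of $\ell$, $\nu(T_\varepsilon>\ell)\approx(1-\nu(B_\varepsilon(x)))^\ell$.

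Then I would combine these exactly as in the proof of Theorem~\ref{thm:law}. Write $\delta=\delta_\varepsilon(x)=\nu(B_\varepsilon(x))$ and $u=e^{t/\delta}$. Since $R_\ell\ge R_1\circ\bar F^j$ for each $j<\ell$ while $R_\ell\le\ell\max_{j<\ell}R_1\circ\bar F^j$, one has $\{\forall j<\ell:\ R_1\circ\bar F^jx\le u/\ell\}\subseteq\{R_\ell\le u\}\subseteq\{\forall j<\ell:\ R_1\circ\bar F^jx\le u\}$, so that writing $\nu(\delta\log\tau_\varepsilon\le t)=\sum_{\ell\ge1}\nu\bigl(T_\varepsilon=\ell,\ R_\ell\le u\bigr)$ the problem reduces to estimating $\nu\bigl(T_\varepsilon=\ell,\ \text{the $\bar F$-orbit of }x\text{ stays in }\{R_1\le v\}\text{ for }\ell\text{ steps}\bigr)$ for $v$ comparable to $e^{t/\delta}$. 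Using the mixing of $\bar F$ to decouple the ``first return to $B_\varepsilon(x)$ at time $\ell$'' event from the ``all of the first $\ell$ excursions last at most $v$'' event, together with the fact that $\{R_1>v\}$ is not clustered under $\bar F$ (so that the escape rate equals $\nu(R_1>v)\sim\tfrac1{\beta\log v}$ to leading order), this quantity is squeezed, up to lower-order factors, between constant multiples of $\delta(1-\delta)^{\ell-1}\bigl(1-\tfrac1{\beta\log v}\bigr)^{\ell}$; taking $\log v\sim t/\delta$ for the upper bound and, for the lower bound, restricting to $\ell\le A/\delta$ (so that $\log v=t/\delta-\log\ell\sim t/\delta$) and then letting $A\to\infty$, the geometric summation is the same as the one giving $G_\delta(t)$ and $H_\delta(t)$ in the toy model and produces $\tfrac{1}{1+1/(\beta t)}=\tfrac{\beta t}{1+\beta t}$. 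The pointwise quantity $\delta=\nu(B_\varepsilon(x))$ is carried along inside the integral over $x$: the value $\tfrac{\beta t}{1+\beta t}$ coming out of the geometric series is independent of $\delta$, and since $\nu(B_\varepsilon(x))\to0$ for $\nu$-a.e.\ $x$ (discarding, via (ii) and the dimension theory of $\nu$ used for Theorem~\ref{thm:cas3}, an exceptional set of vanishing $\nu$-measure), dominated convergence gives the claim.

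The main obstacle is the decoupling step. In the toy model $T_\varepsilon$ and $(R_\ell)$ were literally independent; here both are determined by the $\bar F$-orbit of $x$ (plus its central cylinder), so one must show that conditioning on the rare event that this orbit revisits the shrinking ball $B_\varepsilon(x)$ at a prescribed time does not bias the statistics of the excursion lengths $R_1\circ\bar F^j$. This requires a uniform transfer-operator (or Young-tower) estimate for $\bar F$ showing that the events $\{\bar F^jx\in B_\varepsilon(x)\}$ and $\{R_1\circ\bar F^jx>v\}$ are asymptotically independent under $\nu$ with an error that remains controllable when summed over $j<\ell$ and $\ell\le A/\delta$, and over small $\varepsilon$. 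A secondary, more routine point is to make the approximations $\nu(T_\varepsilon=\ell)\approx\delta(1-\delta)^{\ell-1}$ and (the escape-rate estimate) $\nu(\forall j<\ell:\ R_1\circ\bar F^jx\le v)\approx(1-\tfrac1{\beta\log v})^{\ell}$ uniform in the relevant range of $\ell$ and $\varepsilon$; because only the logarithmic asymptotics of the tail of $R_1$ enter, the non-independence of consecutive excursions costs only lower-order terms and does not affect the limit.
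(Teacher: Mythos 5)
Your plan transplants the toy-model architecture verbatim: decompose $\tau_\varepsilon=R_{T_\varepsilon}$, estimate the tail of $R_1$ by a local limit theorem, apply Hirata to the induced map $\bar F$ to control $T_\varepsilon$, and then redo the geometric-sum squeeze of Theorem~\ref{thm:law}. This is a genuinely different route from the paper's, and it has a real gap that you yourself flag but do not close.

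The gap is precisely the decoupling step. In the toy model $T_\varepsilon$ was literally independent of $(R_\ell)$, so the sum $\sum_\ell\PP(T_\varepsilon=\ell)\PP(R_\ell\le u)$ factored. Here both $T_\varepsilon$ and $(R_\ell)$ are functionals of the same orbit, and you need $\nu(T_\varepsilon=\ell,\ R_\ell\le u)\approx\nu(T_\varepsilon=\ell)\cdot\nu(R_\ell\le u)$ with errors that stay summable over $\ell\le A/\delta$ as $\delta=\nu(B_\varepsilon(x))\to0$. Invoking ``mixing of $\bar F$'' does not supply this: the events $\{\bar F^jx\in B_\varepsilon(x),\ j<\ell\}$ and $\{R_1\circ\bar F^j x\le v,\ j<\ell\}$ both depend on the entire $\ell$-block, not on well-separated time windows, so a vanilla mixing estimate does not factor them. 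Moreover, applying the Hirata--Collet--Galves--Schmitt return-time statistics to $\bar F$ is itself non-trivial: $\bar F$ is a first-return map of an infinite-measure-preserving extension, the return time $R_1$ is unbounded, and $\bar F$ is not a finite-alphabet subshift of finite type in any obvious sense; your claim that ``refining the cylinders by the value of $R_1$'' restores a Markov structure to which the exponential-law theory applies would itself need a careful proof (countable Markov partition with suitable distortion and mixing bounds).

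The paper avoids both problems by \emph{not} inducing. Instead of decomposing $\tau_\varepsilon$ as $R_{T_\varepsilon}$, it applies the Dvoretzky--Erd\H{o}s last-passage decomposition directly at the level of the extension, inside the fixed cylinder $C_k(x)=B_{e^{-k}}(x)$: equation~\eqref{sec:formule} partitions $C_k(x)$ according to the last $\ell\le n$ with $S_\ell\varphi=0$ and $\theta^\ell x\in C_k(x)$. Each term is then estimated by the doubly-conditional local limit theorem, Proposition~\ref{prop:pro2}, whose whole point is to quantify simultaneously (a) the probability that $S_\ell\varphi=0$, (b) the conditioning on starting in $A=C_k(x)$, and (c) the conditioning on landing in $B=C_k(x)\cap\{\tau_{e^{-k}}>n-\ell\}$, with an explicit error $O(\hat\nu(B)ke^{\eta q}/(n-k)^{3/2})$. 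That single estimate is exactly the ``uniform transfer-operator estimate showing asymptotic independence'' you say the argument would need; the paper proves it via Nagaev--Guivarc'h perturbation and then never has to deal with the induced map, the tail of $R_1$, or the exponential law for $\bar F$ at all. Hirata's theorem (Proposition~\ref{pro:expstat}) is used only for the original shift $\theta$, and only to discard the small-$\ell$ terms in the lower bound. So your outline is consistent with the correct answer and the correct heuristics, but the core analytic input that makes the squeeze legitimate is missing; the paper's conditional LLT is the device that supplies it, and replacing the induced-map decomposition by the last-passage decomposition is what lets the conditional LLT be applied cleanly.
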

\begin{coro}\label{cor:cas3}
If the measure $\nu$ is not the measure of maximal entropy, then the sequence of random variables
${\log\log\tau_\varepsilon
     +d\log \varepsilon\over\sqrt{-\log\varepsilon}}$ converges in distribution as $\varepsilon\to0$ to a centered
gaussian random variable of variance $2\sigma^2_h$.

In the case where $\nu$ is the measure of maximal entropy, then the sequence of random variables $\varepsilon^d\log\tau_{\varepsilon}$ converges in distribution to a finite mixture of the law found in the previous theorem, i.e. there exists some probability vector $\alpha=(\alpha_n)$ and positive constants $\beta_n$ such that
the sequence of random variables $\varepsilon^d\log\tau_\varepsilon$ converges in distribution to a random variable with distribution function of density $\sum_n\alpha_n\frac{\beta_n t}{1+\beta_n t}{\bf 1}_{(0;+\infty)}(t)$.   
\end{coro}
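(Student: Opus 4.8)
To prove Corollary~\ref{cor:cas3}, the plan is to reduce everything to Theorem~\ref{thm:cas3bis} together with the local behaviour of $\nu$ on balls. As in Section~\ref{sec:toy}, the key point is that for $e^{-(k+1)}<\varepsilon\le e^{-k}$ (with $k=\lfloor-\log\varepsilon\rfloor$) the open ball $B_\varepsilon(x)$ is exactly the cylinder $[x_{-k},\dots,x_k]$, so $\nu(B_\varepsilon(x))$ is a cylinder measure and the Gibbs property of $\nu$ gives
\[
-\log\nu(B_\varepsilon(x))=nP-S_nh(\theta^{-k}x)+O(1),\qquad n:=2k+1,
\]
uniformly in $x$ and $\varepsilon$, where $P$ is the pressure of $h$. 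Writing $\bar h:=\int h\,d\nu$, so that $h_\nu(\theta)=P-\bar h$, recalling that the Hausdorff dimension satisfies $d=2h_\nu(\theta)$ for the present symbolic metric, and setting
\[
U_\varepsilon:=-\log\nu(B_\varepsilon(x))-d(-\log\varepsilon),
\]
one gets $U_\varepsilon=-\bigl(S_nh(\theta^{-k}x)-n\bar h\bigr)+O(1)$ with $n=2(-\log\varepsilon)+O(1)$. Two elementary identities then drive the argument: $\log\log\tau_\varepsilon+d\log\varepsilon=\log\bigl(\nu(B_\varepsilon)\log\tau_\varepsilon\bigr)+U_\varepsilon$ and $\varepsilon^d\log\tau_\varepsilon=e^{U_\varepsilon}\,\nu(B_\varepsilon)\log\tau_\varepsilon$.

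First I would treat the case $\sigma^2_h>0$, i.e. $\nu$ not the measure of maximal entropy. By the central limit theorem for the Gibbs measure $\nu$ (asymptotic variance $\sigma^2_h$) and the shift invariance of $\nu$ (so $\theta^{-k}x$ is again $\nu$-distributed), $\frac1{\sqrt n}(S_nh-n\bar h)$ converges in distribution to a centered Gaussian of variance $\sigma^2_h$; since $n=2(-\log\varepsilon)+O(1)$ this gives that $U_\varepsilon/\sqrt{-\log\varepsilon}$ converges in distribution to a centered Gaussian of variance $2\sigma^2_h$. On the other hand Theorem~\ref{thm:cas3bis} says that $\nu(B_\varepsilon)\log\tau_\varepsilon$ converges in distribution to a random variable $W$ with distribution function $t\mapsto\frac{\beta t}{1+\beta t}$ on $(0,+\infty)$, hence $W\in(0,+\infty)$ almost surely; consequently $\log(\nu(B_\varepsilon)\log\tau_\varepsilon)$ is tight and $\frac1{\sqrt{-\log\varepsilon}}\log(\nu(B_\varepsilon)\log\tau_\varepsilon)$ converges to $0$ in probability. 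Substituting in the first identity and applying Slutsky's lemma yields that $\frac{\log\log\tau_\varepsilon+d\log\varepsilon}{\sqrt{-\log\varepsilon}}$ converges in distribution to a centered Gaussian of variance $2\sigma^2_h$, which is the first assertion.

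Next, in the case where $\nu$ is the measure of maximal entropy, equivalently $\sigma^2_h=0$, the potential $h$ is cohomologous to a constant, $\nu$ is the Parry measure, and $\nu(B_\varepsilon(x))=u_{x_{-k}}\lambda^{-2k}v_{x_k}$ holds exactly, where $\lambda=e^{h_{\mathrm{top}}}$ is the Perron eigenvalue of $M$, $d=2\log\lambda$, and $u,v$ are the normalised left and right Perron eigenvectors. Hence $e^{U_\varepsilon}=\varepsilon^d/\nu(B_\varepsilon(x))=\lambda^{2(\log\varepsilon+k)}/(u_{x_{-k}}v_{x_k})$. Letting $\varepsilon\to0$ along $\varepsilon=e^{-N}$ (so that $\log\varepsilon+k=0$; a general sequence along which the fractional part of $-\log\varepsilon$ converges merely rescales the numerator by a fixed constant), the factor $\lambda^{2(\log\varepsilon+k)}\to1$, while $(x_{-k},x_k)$ converges in distribution, by mixing of the Markov measure $\nu$, to a pair of independent symbols each with the Parry stationary distribution $\pi$. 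Therefore $e^{U_\varepsilon}$ converges in distribution to a random variable $Y$ taking the finitely many positive values $\{1/(u_av_b)\}_{a,b\in\A}$, with probabilities $\alpha_n$ read off from $\pi\otimes\pi$. Granting in addition that $\nu(B_\varepsilon)\log\tau_\varepsilon$ is, in the limit, independent of the two boundary symbols $x_{\pm k}$ — equivalently, converges in distribution to $W$ conditionally on the cylinder $[x_{-k},\dots,x_k]$ — the pair $\bigl(e^{U_\varepsilon},\nu(B_\varepsilon)\log\tau_\varepsilon\bigr)$ converges in distribution to $(Y,W)$ with $Y$ and $W$ independent, and the second identity gives that $\varepsilon^d\log\tau_\varepsilon$ converges in distribution to $YW$, whose distribution function is
\[
\PP(YW\le t)=\sum_n\alpha_n\,\PP(y_nW\le t)=\sum_n\alpha_n\frac{\beta_n t}{1+\beta_n t},\qquad\beta_n:=\beta/y_n,
\]
the announced finite mixture.

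The main obstacle is the conditional form of Theorem~\ref{thm:cas3bis} used in this last step: one must show that the rescaled return time decorrelates from the finitely many symbols sitting at the boundary of the shrinking ball, and, relatedly, pin down along which sequences $\varepsilon\to0$ the ratio $\varepsilon^d/\nu(B_\varepsilon)$ stabilises in law. This should follow by revisiting the proof of Theorem~\ref{thm:cas3bis}, whose dominant mechanism — the recurrence of the walk $S_n\varphi$ to $0$, governed by $\det\sigma^2_\varphi$ — lives at large scale and is insensitive to a bounded amount of local information. By comparison the case $\sigma^2_h>0$ is routine, since the $\sqrt{-\log\varepsilon}$ normalisation absorbs both the bounded oscillation contained in the lower-order terms of $U_\varepsilon$ and the tight contribution of $W$.
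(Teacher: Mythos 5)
Your argument is correct and follows essentially the same route as the paper: in the positive-variance case the paper likewise combines the central limit theorem for $\log\nu(C_k(\cdot))$ with the tightness of $\nu(C_k)\log\tau_{e^{-k}}$ supplied by Theorem~\ref{thm:cas3bis} (i.e.\ a Slutsky argument showing $Y_k+X_k\to0$ in probability), and in the maximal-entropy case it likewise uses the exact Markov/Parry formula $\nu(C_k(x))=Q_{x_{-k}x_k}e^{-(2k+1)d/2}$ and sums the conditional limits over the boundary symbols $(x_{-k},x_k)$. The conditional convergence you ``grant'' in the last step is precisely Lemma~\ref{lem:loi}, whose statement is already conditional on the cylinder $C_k(x)$, so that step requires no further work.
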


\begin{exe} We provide an example where the function $\varphi(x)$ only depends on the first coordinate $x_0$, i.e. $\varphi(x)=\varphi(x_0)$.
On the shift $\Sigma=\{E,N,W,S\}^\ZZ$ the function $\varphi(E)=(1,0)$, $\varphi(N)=(0,1)$, $\varphi(W)=(-1,0)$ and $\varphi(S)=(0,-1)$ fulfill the hypotheses.
\end{exe}

The rest of the section is devoted to the proof of these results. In Subsection~\ref{sub:zxspectral} we recall some preliminary results and prove a uniform conditional local limit theorem. In Subsection~\ref{sub:zxpointwise} we prove Theorem~\ref{thm:cas3} and in Subsection~\ref{sub:zxfluctuation} we prove Theorem~\ref{thm:cas3bis} and Corollary~\ref{cor:cas3}.

\subsection{Spectral analysis of the Perron-Frobenius operator and Local Limit Theorem}\label{sub:zxspectral}

In order to exploit the spectral properties of the Perron-Frobenius operator we 
quotient out the "past". We define~:
$$\hat\Sigma:=\{\omega:=(\omega_n)_{n\in\Bbb N}\ :\ 
\forall n\in{\Bbb N},\ M(\omega_n,\omega_{n+1})=1\}, $$
$$\hat d\left((\omega_n)_{n\ge 0},(\omega'_n)
_{n\ge 0}\right):=e^{-\hat r(\omega,\omega')}$$
with $\hat r((\omega_n)_{n\ge 0},(\omega'_n)
_{n\ge 0})=\inf\{m\ge 0\ :\ 
\omega_m\ne\omega'_m\}$ and
$$\hat\theta\left((\omega_n)_{n\ge 0}\right)
=(\omega_{n+1})_{n\ge 0}.$$
Let us define the canonical projection $\Pi:\Sigma\rightarrow 
\hat\Sigma$
defined by $\pi\left((\omega_n)_{n\in \Bbb Z}\right)=(\omega_n)_{n\ge 0}$.
Let $\hat\nu$ be the image probability measure (on $\hat\Sigma$) of $\nu$ by $\Pi$.
There exists a function 
$\psi:\hat\Sigma\rightarrow {\Bbb Z}^2$ such that 
$\psi\circ\Pi=\varphi\circ\theta^{m_0}$.

Let us denote by $P:L^2(\hat\nu)\rightarrow 
L^2(\hat\nu)$ the Perron-Frobenius operator
such that~:
$$\forall f,g\in L^2(\hat\nu),\ \ 
\int_{\hat\Sigma}Pf(x)g(x)\, d\hat\nu(x)=
        \int_{\Sigma^+}f(x)g\circ \hat\theta(x)\, 
    d\hat\nu(x).$$
Let $\eta\in]0;1[$. Let us denote by ${\B}$ the set of bounded
$\eta$-Hölder continuous function $g:\hat\Sigma
\rightarrow\mathbb C$ endowed with the usual Hölder
norm~:
$$\Vert g\Vert_{\B}:=\Vert g\Vert_\infty+
\sup_{x\ne y}{\vert g(y)-g(x)\vert\over \hat d(x,y)^\eta}. $$
We denote by ${\B}^*$ the topological dual of 
$\B$.
For all $u\in{\Bbb R}^2$,
we consider the operator $P_u$ defined on $({\B},\Vert\cdot\Vert_{\B})$ by~:
$$P_u(f):=P(e^{iu\psi}f).$$

Note that the hypothesis of non-arithmeticity of $\varphi$ is equivalent to the following one on $\psi$~: \emph{for any $u\in[-\pi;\pi]^2\setminus\{
 (0,0)\}$, the operator $P_u$ has no eigenvalue on the
unit circle.}

We will use the method introduced by Nagaev in 
\cite{Nagaev57,Nagaev61}, adapted by Guivarch and Hardy
in \cite{GuivarchHardy} and extended
by Hennion and Herv\'e in \cite{HennionHerve}.
It is based on the family of operators $(P_u)_u$ 
and their spectral properties expressed in these two propositions.

\begin{prop}[Uniform contraction]
\label{prop:contract} 
There exists $\alpha\in(0;1)$, $C>0$ such that,
for all $u\in[-\pi;\pi]^2\setminus[-\beta;\beta]^2$
and all integer $n\ge 0$, for all $f\in{\B}$,
we have~:
$$\Vert P_u^n(f)\Vert_{\B}\le 
    C\alpha^n \Vert f\Vert_{\B}.$$
\end{prop}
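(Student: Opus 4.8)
The plan is to derive the uniform exponential estimate from two facts: (a) for each individual $u\in[-\pi;\pi]^2\setminus\{(0,0)\}$, the spectral radius of $P_u$ on $\B$ is strictly less than $1$; and (b) the map $u\mapsto P_u$ is Lipschitz from $\RR^2$ into the space of bounded operators on $\B$. Granting these, the proposition follows by a routine compactness argument: since $K:=[-\pi;\pi]^2\setminus[-\beta;\beta]^2$ is contained in the compact set $[-\pi;\pi]^2\setminus(-\beta;\beta)^2$, which avoids the origin, one covers $K$ by finitely many neighbourhoods on each of which a uniform geometric bound holds, and then takes the worst constants.

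For (a), I would first record the uniform Doeblin--Fortet (Lasota--Yorke) inequality. Because $|e^{iu\psi}|\equiv1$ and $\B$ is a Banach algebra, the Lasota--Yorke inequality valid for the Perron--Frobenius operator $P$ of the subshift transfers to $P_u=P(e^{iu\psi}\,\cdot\,)$ with a contraction ratio $\kappa\in(0;1)$ independent of $u$ (only the additive constant depends on $u$, through the H\"older seminorm of $\psi$ and $|u|$, hence stays bounded on bounded sets). By the Ionescu-Tulcea--Marinescu / Hennion--Herv\'e theorem each $P_u$ is then quasi-compact on $\B$ with essential spectral radius $\le\kappa$, so the part of its spectrum outside $\{|z|\le\kappa\}$ consists of finitely many eigenvalues of finite multiplicity. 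Since $P$ is a transfer operator we have $|P_uf|\le P|f|$ pointwise and $\|Ph\|_\infty\le\|h\|_\infty$; an eigenfunction in $\B\subset L^\infty$ therefore forces its eigenvalue to have modulus $\le1$. Invoking the reformulation of the non-arithmeticity hypothesis recalled just after the definition of $P_u$ (no eigenvalue of modulus $1$ for $u\neq(0,0)$), I conclude $r(u):=\mathrm{spr}_{\B}(P_u)<1$ for every such $u$. If one prefers to prove that reformulation rather than quote it, the standard route is: from $P_ug=\lambda g$ with $|\lambda|=1$, the inequality $|g|\le P|g|$ combined with the $\hat\theta$-invariance and ergodicity of $\hat\nu$ forces $|g|$ constant; then equality in the triangle inequality yields the functional equation $(g\circ\hat\theta)\,\overline g=\overline\lambda\,e^{iu\psi}$, which lifts to $\Sigma$ via $\psi\circ\Pi=\varphi\circ\theta^{m_0}$ and contradicts the hypothesis on $\varphi$.

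For (b), $\B$ being a Banach algebra and $\psi$ being bounded and locally constant, $u\mapsto e^{iu\psi}$ is Lipschitz into $\B$ — one writes $e^{iu\psi}-e^{iu'\psi}=e^{iu'\psi}(e^{i(u-u')\psi}-1)$ and estimates both the sup-norm and the H\"older seminorm using $|e^{ia}-e^{ib}|\le|a-b|$ — so $u\mapsto P_u$ is Lipschitz in operator norm, and in particular bounded on $K$. With (a) and (b) in hand, for each $u_0\in K$ I choose $\rho_{u_0}\in(r(u_0);1)$; by the spectral radius formula there is $n_1=n_1(u_0)$ with $\|P_{u_0}^{n_1}\|<\rho_{u_0}^{n_1}$, and by continuity there is an open neighbourhood $W_{u_0}$ on which $\|P_u^{n_1}\|<\rho_{u_0}^{n_1}$ still holds; Euclidean division $n=qn_1+s$ with $0\le s<n_1$, together with submultiplicativity of the operator norm, gives $\|P_u^n\|\le(\sup_K\|P_v\|/\rho_{u_0})^{n_1}\,\rho_{u_0}^{\,n}$ for all $u\in W_{u_0}$ and all $n\ge0$. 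Finitely many $W_{u_0}$ cover $K$; taking $\alpha$ the largest of the corresponding $\rho_{u_0}$'s and $C$ the largest of the corresponding constants (enlarged to be $\ge1$ to absorb $n=0$) finishes the proof.

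The main obstacle, and the only step that genuinely uses the structure of $\varphi$, is establishing (a) — specifically the absence of unit-modulus eigenvalues of $P_u$, i.e. the non-arithmeticity input; everything else (the quasi-compactness bookkeeping, the continuity of $u\mapsto P_u$, and the local-to-global compactness argument) is routine. Since that absence is already recorded as a remark in the text, the work that remains here is mostly checking that the Doeblin--Fortet constants and the multiplication operators $f\mapsto e^{iu\psi}f$ behave uniformly well in $u$ on compact sets.
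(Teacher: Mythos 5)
Your argument is correct and is exactly the expansion of the paper's one-line proof (``This property easily follows from the fact that the spectral radius is smaller than $1$ for $u\neq 0$''): you establish $\mathrm{spr}_{\B}(P_u)<1$ via the uniform Doeblin--Fortet inequality, quasi-compactness, the bound $|P_uf|\le P|f|$, and the stated non-arithmeticity reformulation, and then obtain uniformity over $[-\pi;\pi]^2\setminus[-\beta;\beta]^2$ by norm-continuity of $u\mapsto P_u$, the spectral radius formula, and a finite cover of the compact set $[-\pi;\pi]^2\setminus(-\beta;\beta)^2$. This is the standard route and matches the authors' intent; no gaps.
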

This property easily follows from the fact that the spectral radius 
is smaller than 1 for $u\neq 0$.
In addition, since $P$ is a quasicompact operator on
 $\B$ and since $u\mapsto P_u$ is a regular
perturbation of $P_0=P$, we have~:
\begin{prop}[Perturbation result]
\label{prop:perturb}
There exists $\alpha>0$, $\beta>0$, $C>0$, $c_1>0$, $\theta\in]0;1[$ such that~:
there exists $u\mapsto \lambda_u$ belonging to 
$C^3([-\beta;\beta]^2\rightarrow {\Bbb C})$, there exists $u\mapsto v_u$ 
belonging to $C^3([-\beta;\beta]^2\rightarrow{\B})$, there exists
$u\mapsto \phi_u$ belonging to $C^3([-\beta;\beta]^2\rightarrow{\B}*)$
such that, for all $u\in[-\beta;\beta]^2$, for all $f\in\B$
and for all $n\ge 0$, we have the decomposition~:
$${P_u}^n(f)={\lambda_u}^n\phi_u(f)v_u+N_u^n(f),$$
with 
\begin{enumerate}
\item\label{en:rest} $\Vert{N_u}^n(f)\Vert_{\B}\le C\alpha^n\Vert f\Vert_{\B}$,
\item\label{en:bound} $\vert\lambda_u\vert\le e^{-c_1  |u|^2}$
and $c_1 |u|^2\le  \sigma^2_\phi u\cdot u$,
\item\label{en:initial}
with initial values~:
$v_0={\bf 1}$, $\phi_0=\hat\nu$, $\nabla\lambda_{u=0}=0$ and
$D^2\lambda_{u=0}=-\sigma^2_\varphi$.
\end{enumerate}
\end{prop}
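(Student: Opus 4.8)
The plan is to deduce this from the Nagaev--Guivarch--Hennion--Herv\'e perturbation theory of quasi-compact operators~\cite{HennionHerve}, applied to the family $u\mapsto P_u$ on $(\B,\Vert\cdot\Vert_\B)$. The input at $u=0$ is classical Gibbs theory on a mixing subshift of finite type with a H\"older potential: $P_0=P$ is quasi-compact on $\B$, the value $1$ is a simple isolated eigenvalue with eigenfunction $v_0=\mathbf 1$ and one-dimensional eigenprojection $f\mapsto\hat\nu(f)\mathbf 1$, and the rest of $\mathrm{spec}(P)$ lies in a disc $\{|z|\le r_0\}$ with $r_0<1$. The perturbation is smooth in a strong sense: $\psi$ takes finitely many values, so multiplication by $e^{iu\cdot\psi}$ is a bounded operator on $\B$ and $u\mapsto e^{iu\cdot\psi}\in\B$ is $C^\infty$, with $k$-th differential $(i\psi)^{\otimes k}e^{iu\cdot\psi}$ of $\B$-norm bounded uniformly in $u$ because $\psi$ is bounded; composing with the fixed operator $P$ shows $u\mapsto P_u$ is $C^\infty$ — in particular $C^3$ — into the space of bounded operators on $\B$.

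Next I would apply analytic perturbation theory to the isolated eigenvalue $1$ of $P$. Fix a circle $\Gamma$ around $1$ contained in the resolvent set of $P$ and disjoint from $\{|z|\le r_0\}$; for $u$ in a sufficiently small square $[-\beta;\beta]^2$ the circle $\Gamma$ still separates $\mathrm{spec}(P_u)$, so the Riesz projection $\Pi_u=\frac1{2\pi i}\oint_\Gamma(z-P_u)^{-1}\,dz$ is rank one and, the resolvent being $C^3$ in $u$ uniformly for $z\in\Gamma$, depends $C^3$-smoothly on $u$. This produces $C^3$ maps $u\mapsto\lambda_u\in\CC$ (the eigenvalue enclosed by $\Gamma$), $u\mapsto v_u\in\B$ (a spanning eigenvector, normalized by $\hat\nu(v_u)=1$) and $u\mapsto\phi_u\in\B^*$ (with $\Pi_u f=\phi_u(f)v_u$), with initial values $\lambda_0=1$, $v_0=\mathbf 1$, $\phi_0=\hat\nu$. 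Setting $N_u:=P_u(\mathrm{Id}-\Pi_u)=P_u-\lambda_u\Pi_u$, which commutes with $\Pi_u$, gives the decomposition ${P_u}^n(f)=\lambda_u^n\phi_u(f)v_u+N_u^n(f)$; the spectral radius of $N_u$ is $<1$, and by compactness of $\Gamma\times[-\beta;\beta]^2$ and continuity of the resolvent there the constants $C$ and $\alpha$ in $\Vert N_u^n(f)\Vert_\B\le C\alpha^n\Vert f\Vert_\B$ may be chosen uniform over the square; this establishes the decomposition together with the remainder bound~\eqref{en:rest}. (For $u$ outside the square, Proposition~\ref{prop:contract} supplies the complementary uniform decay of $P_u^n$ itself, which is what the later applications on all of $[-\pi;\pi]^2$ will use.)

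It remains to compute the low-order jet of $\lambda_u$ at $0$ and to deduce the eigenvalue bound~\eqref{en:bound}. Applying $\hat\nu$ to $P_uv_u=\lambda_uv_u$ and using $\hat\nu\circ P=\hat\nu$ and $\hat\nu(v_u)\equiv1$ gives $\lambda_u=\hat\nu(e^{iu\cdot\psi}v_u)$, whence differentiating at $0$, $\nabla\lambda_{u=0}=i\,\hat\nu(\psi)=i\int_\Sigma\varphi\,d\nu=0$ by the centering of $\varphi$ (necessary for recurrence of the extension). For the Hessian I would exploit the standard identity $E_{\hat\nu}\big[e^{iu\cdot S_n\psi}\big]=\hat\nu(P_u^n\mathbf 1)=\lambda_u^n\,\phi_u(\mathbf 1)\,\hat\nu(v_u)+\hat\nu(N_u^n\mathbf 1)$ and differentiate it twice at $u=0$: the left-hand side contributes $-\cov_{\hat\nu}(S_n\psi)$ (using the centering again), the remainder is $O(n^2\alpha^n)$, and since $\nabla\lambda_0=0$, $\lambda_0=1$, $\phi_0(\mathbf 1)=\hat\nu(v_0)=1$ the main term contributes $n\,D^2\lambda_{u=0}+O(1)$; dividing by $n$ and letting $n\to\infty$, and using that $S_n\psi$ has the same $\hat\nu$-law as $(S_n\varphi)\circ\theta^{m_0}$, so that $\frac1n\cov_{\hat\nu}(S_n\psi)=\frac1n\cov_\nu(S_n\varphi)\to\sigma_\varphi^2$, we obtain $D^2\lambda_{u=0}=-\sigma_\varphi^2$, which together with the values recorded above is~\eqref{en:initial}. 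Finally $\sigma_\varphi^2$ is a covariance matrix and is assumed invertible, hence positive definite, so $c_1|u|^2\le\sigma_\varphi^2u\cdot u$ with $c_1$ any constant below its least eigenvalue; and the Taylor expansion $\log\lambda_u=-\tfrac12\sigma_\varphi^2u\cdot u+o(|u|^2)$ gives $\mathrm{Re}\log\lambda_u\le-c_1|u|^2$ on a small enough square, i.e.\ $|\lambda_u|\le e^{-c_1|u|^2}$. The step I expect to be the main obstacle is the \emph{uniformity} in $u$ of the spectral gap for $N_u$ — that $C$ and $\alpha$ may be taken independent of $u\in[-\beta;\beta]^2$ — which is exactly what the Hennion--Herv\'e abstract framework is built to deliver from the quasi-compactness of $P_0$ and the continuity of $u\mapsto P_u$.
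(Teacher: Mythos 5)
Your proposal is correct and follows exactly the route the paper intends: the paper gives no proof of this proposition beyond citing it as a multidimensional version of IV-8, IV-11, IV-12 of Hennion--Herv\'e, and your argument is a faithful, detailed instantiation of that machinery (smoothness of $u\mapsto P_u$, analytic perturbation of the simple isolated eigenvalue $1$ of the quasi-compact transfer operator via Riesz projections, and the standard computation of the jet of $\lambda_u$ from the characteristic-function identity). Your parenthetical observation that the centering $\int\varphi\,d\nu=0$ is needed for $\nabla\lambda_{u=0}=0$ is accurate — the paper leaves this hypothesis implicit.
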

This result is a multidimensional version
of IV-8, IV-11, IV-12 of \cite{HennionHerve}, in this context.

Next proposition is essential to our work. It may be viewed as a doubly local version of the central limit theorem :
first, it is local in the sense that we are looking at the probability that $S_n\varphi=0$
while the classical central limit theorem is only concerned with the probability that $|S_n\varphi|\le \varepsilon \sqrt{n}$~;
second, it is local in the sense that we are looking at this probability conditioned to the fact that we are starting from a set $A$ and landing at a set $B$ on the base.

\begin{prop}\label{prop:pro2}
There exists a real number $C_1>0$ such that, 
for all integer $n> k> m_0$
and all $q>0$, all
$q$-cylinder $A$ of $\Sigma$
and all measurable subset $B$ of $\Sigma^+$, we have~:
$$\left\vert \nu\left(A\cap\left\{S_n\varphi=0\right\} 
 \cap \theta^{-n}(\theta^k(\Pi^{-1}(B)))\right) 
   -{\nu(A)\hat\nu(B)\over 2\pi(n-k)\sqrt{det(\sigma^2_\varphi)} 
  }\right\vert\le C_1
{\hat\nu(B)ke^{\eta q}\over (n-k)^{3/2}}.$$
\end{prop}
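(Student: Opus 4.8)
The plan is to combine the Fourier inversion formula on $\ZZ^2$ with the spectral decomposition of the twisted transfer operators provided by Propositions~\ref{prop:contract} and~\ref{prop:perturb}. Since $S_n\varphi$ is $\ZZ^2$-valued,
\[
\mathbf 1_{\{S_n\varphi=0\}}=\frac1{(2\pi)^2}\int_{[-\pi;\pi]^2}e^{iu\cdot S_n\varphi}\,du,
\]
so, writing $C:=\theta^{-n}(\theta^k(\Pi^{-1}(B)))$, the left-hand side of the proposition equals $\frac1{(2\pi)^2}\int_{[-\pi;\pi]^2}I_n(u)\,du$ with $I_n(u):=\int_\Sigma \mathbf 1_A\,e^{iu\cdot S_n\varphi}\,\mathbf 1_C\,d\nu$. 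The first, and most delicate, step is to rewrite $I_n(u)$ on the one-sided shift. Observe that $x\in C$ iff $\hat\theta^{\,n-k}(\Pi x)\in B$. Using $\theta$-invariance of $\nu$, the identity $\psi\circ\Pi=\varphi\circ\theta^{m_0}$ and the fact that $\varphi$ is constant on $m_0$-cylinders, one rewrites $I_n(u)=\int_{\hat\Sigma}g_A\,e^{iu\cdot S_n\psi}\,\bigl(\mathbf 1_B\circ\hat\theta^{\,n-k+m_0}\bigr)\,d\hat\nu$ (after a shift of the time indices by $m_0$ and, if needed, a harmless splitting of $A$ into finitely many cylinders), where $g_A\in\B$ is built from $\mathbf 1_A$ by conditioning on the non-negative coordinates; the Gibbs property of $\nu$ gives $\int g_A\,d\hat\nu=\nu(A)$ and $\|g_A\|_\B\le C\,e^{\eta q}$ — this is where the factor $e^{\eta q}$ comes from. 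Now peel off the last $k-m_0$ steps: with $S_n\psi=S_{n-k+m_0}\psi+S_{k-m_0}\psi\circ\hat\theta^{\,n-k+m_0}$ and the identity $\int f\,e^{iu\cdot S_m\psi}\,(h\circ\hat\theta^{\,m})\,d\hat\nu=\int P_u^m(f)\,h\,d\hat\nu$ (with $m=n-k+m_0$, $h=e^{iu\cdot S_{k-m_0}\psi}\mathbf 1_B$), one gets
\[
I_n(u)=\int_{\hat\Sigma}P_u^{\,n-k+m_0}(g_A)\,e^{iu\cdot S_{k-m_0}\psi}\,\mathbf 1_B\,d\hat\nu .
\]
It is precisely this operator power $\simeq n-k$ that produces the $n-k$ in the denominator, and the hypothesis $k>m_0$ is exactly what makes the number of peeled steps non-negative.

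Next, split the $u$-integral into $[-\beta;\beta]^2$ and its complement in $[-\pi;\pi]^2$. On the complement, Proposition~\ref{prop:contract} bounds $\|P_u^{\,n-k+m_0}(g_A)\|_\B$ by $C\alpha^{\,n-k}\|g_A\|_\B$, so, estimating the integration against $\mathbf 1_B$ by $\|\cdot\|_\infty\hat\nu(B)$, this region contributes $O\bigl(\alpha^{\,n-k}e^{\eta q}\hat\nu(B)\bigr)=O\bigl((n-k)^{-3/2}e^{\eta q}\hat\nu(B)\bigr)$, using that $m\mapsto\alpha^m m^{3/2}$ is bounded. On $[-\beta;\beta]^2$, Proposition~\ref{prop:perturb} replaces $P_u^{\,n-k+m_0}(g_A)$ by $\lambda_u^{\,n-k+m_0}\phi_u(g_A)v_u$ up to a remainder of the same exponentially small order. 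For the leading term one inserts the first-order data of Proposition~\ref{prop:perturb}: $\phi_u(g_A)=\nu(A)+O(|u|e^{\eta q})$ (from $\phi_0=\hat\nu$), $\int v_u e^{iu\cdot S_{k-m_0}\psi}\mathbf 1_B\,d\hat\nu=\hat\nu(B)\bigl(1+O(|u|k)\bigr)$ (from $v_0=\mathbf 1$ and $|e^{iu\cdot S_{k-m_0}\psi}-1|\le|u|k\|\psi\|_\infty$), and $\lambda_u^{\,n-k+m_0}=e^{-\frac{n-k}{2}\sigma^2_\varphi u\cdot u}\bigl(1+(n-k)O(|u|^3)\bigr)$ with $|\lambda_u^{\,n-k+m_0}|\le e^{-c_1(n-k)|u|^2}$ (from $\nabla\lambda_0=0$, $D^2\lambda_0=-\sigma^2_\varphi$, $|\lambda_u|\le e^{-c_1|u|^2}$). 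Using $\int_{[-\beta;\beta]^2}\lambda_u^{\,n-k+m_0}\,du=\frac{2\pi}{(n-k)\sqrt{\det\sigma^2_\varphi}}+O\bigl((n-k)^{-3/2}\bigr)$ (two-dimensional Gaussian integral plus the cubic correction) and $\int_{\RR^2}e^{-c_1(n-k)|u|^2}|u|^j\,du=O\bigl((n-k)^{-1-j/2}\bigr)$ for $j=1,2$, the zeroth-order part yields $\frac{\nu(A)\hat\nu(B)}{2\pi(n-k)\sqrt{\det\sigma^2_\varphi}}$ and all remaining pieces contribute $O\bigl((n-k)^{-3/2}(e^{\eta q}+k)\hat\nu(B)+(n-k)^{-2}k e^{\eta q}\hat\nu(B)\bigr)$, which is $O\bigl(k e^{\eta q}\hat\nu(B)/(n-k)^{3/2}\bigr)$ since $e^{\eta q}+k\le 2k e^{\eta q}$ and $n-k\ge1$. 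Adding the two exponentially small contributions gives the stated bound.

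The main obstacle is the first step: turning $\int_\Sigma\mathbf 1_A\,e^{iu\cdot S_n\varphi}\,\mathbf 1_C\,d\nu$ into the exact expression $\int_{\hat\Sigma}P_u^{\,n-k+m_0}(g_A)\,e^{iu\cdot S_{k-m_0}\psi}\,\mathbf 1_B\,d\hat\nu$ on $\hat\Sigma$. One must at the same time absorb the finitely many terms of $S_n\varphi$ depending on negative coordinates, replace the two-sided cylinder $A$ by the one-sided function $g_A$ while controlling $\|g_A\|_\B$ through the Gibbs property (the source of $e^{\eta q}$), keep the operator power equal to $n-k$ up to the fixed constant $m_0$, and make every constant uniform over all integers $n>k>m_0$ and all $q>0$. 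Once this reduction is in place, the rest is the standard Nagaev--Guivarc'h--Hardy argument; the only extra care needed is to check that the three error contributions (the exponentially small ones of size $\alpha^{\,n-k}e^{\eta q}\hat\nu(B)$, the $|u|$-linear one, and the $|u|^2$-quadratic one) recombine into the single bound $C_1 k e^{\eta q}\hat\nu(B)/(n-k)^{3/2}$. Note that $B$ is allowed to be an arbitrary measurable set, with only $\hat\nu(B)$ entering, precisely because $\mathbf 1_B$ appears only integrated against a $\B$-function at the very end of the operator iteration, whereas $A$ has to be a cylinder so that $\|g_A\|_\B$ is finite and correctly bounded.
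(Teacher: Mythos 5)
Your proposal follows essentially the same route as the paper: Fourier inversion over $[-\pi,\pi]^2$, reduction to the one-sided shift by replacing the $q$-cylinder $A$ with a H\"older function of norm $O(e^{\eta q})$ (the paper uses $P^q(\mathbf 1_{\hat A}\circ\hat\theta^{m_0})$), peeling off the last $k-m_0$ steps so that the twisted operator is iterated $n-k+m_0$ times (your formula $\int P_u^{\,n-k+m_0}(g_A)\,e^{iu\cdot S_{k-m_0}\psi}\mathbf 1_B\,d\hat\nu$ is the paper's $\mathbb{E}_{\hat\nu}[P_u^{k-m_0}(\mathbf 1_B P_u^{\ell}(\cdots))]$ after one application of duality), then the Nagaev--Guivarc'h splitting into the contractive region and the perturbative region with the same Gaussian integral and the same bookkeeping of the $e^{\eta q}$ and $k$ factors. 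The argument and the error accounting are correct and match the paper's proof.
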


\begin{proof}
We want to estimate the measure of the set
$Q=A\cap\{S_n\varphi=0\}\cap \theta^{-n}(\theta^k\Pi^{-1}B)$.
Since $A$ is a $q$-cylinder, $\theta^{-q}A=\Pi^{-1}\hat A$ for the cylinder set $\hat A=\Pi\theta^{-q}A$. 
Next, since $\varphi\circ \theta^{m_0}=\psi\circ\Pi$ we have the identity
$\{S_n\varphi\circ\theta^{m_0}=0\}=\{S_n\psi\circ\Pi=0\}$. Thus using the semi-conjugacy $\hat\theta\circ\Pi=\Pi\circ\theta$
\[
\begin{split}
\theta^{-q-m_0}Q &=\theta^{-m_0}(\Pi^{-1}\hat A) \cap \{S_n\psi\circ\Pi\circ\theta^q=0\}\cap \theta^{-n-q+(k-m_0)}(\Pi^{-1}B))  \\
&=\Pi^{-1}( \hat\theta^{-m_0}(\hat A) \cap \{S_n\psi\circ \hat\theta^q=0\}\cap \hat\theta^{-n-q+(k-m_0)}(B))
\end{split}
\]
Since $\psi$ is integer-valued, the relation $1_{\{k=0\}}=\frac{1}{(2\pi)^2}\int_{[-\pi,\pi]^2}e^{iu\cdot k}du$ for any $k\in\ZZ^2$ gives, by invariance of $\nu$,
\[
\begin{split}
\nu(Q)&={\Bbb E}_{\hat\nu}\left(1_{\hat A}\circ\hat\theta^{m_0}1_B\circ\hat\theta^{q+n-(k-m_0)}
\frac{1}{(2\pi)^2}\int_{[-\pi,\pi]^2}\exp(iu\cdot S_n\psi\circ \hat\theta^q)du\right)\\
&=
\frac{1}{(2\pi)^2}\int_{[-\pi,\pi]^2} 
{\Bbb E}_{\hat\nu}\left(1_{\hat A}\circ\hat\theta^{m_0}1_B\circ\hat\theta^{q+n-(k-m_0)}\exp(iu\cdot S_n\psi\circ\hat\theta^q)\right)du
\end{split}
\]
We then estimate the expectation $a(u)={\Bbb E}_{\hat\nu}(\cdots)$.
Using the fact that the Perron-Frobenius $P$ is the dual of $\hat\theta$ we get
\[
\begin{split}
a(u) &= {\Bbb E}_{\hat\nu}\left( P^q(1_{\hat A}\circ\hat\theta^{m_0})\exp(iu\cdot S_n\psi)1_B\circ\hat\theta^{n-(k-m_0)}\right)\\
&={\Bbb E}_{\hat\nu} \left( P_u^n(P^q(1_{\hat A}\circ\hat\theta^{m_0})1_B\circ\hat\theta^{n-(k-m_0)}\right)\\
&={\Bbb E}_{\hat\nu} \left( P_u^{k-m_0}(1_BP_u^{n-(k-m_0)} P^q(1_{\hat A}\circ\hat\theta^{m_0}))\right).
\end{split}
\]
Let us denote for simplicity $\ell=n-(k-m_0)$.
We first show that for large $u$, the quantity $a(u)$ is negligeable.
Using the contraction inequality given in
proposition \ref{prop:contract} applied to ${P_u}^\ell({\bf 1})$,
the fact that $\Vert P^q({\bf 1}_{\hat A}
\circ\hat\theta^{m_0})\Vert_{\B}
  \le 1+e^{\eta (q+m_0)}$,
and the fact that $\vert{\Bbb E}_{\hat\nu}
[{P_u}^{k-m_0}({\bf 1}_Bg)]\vert
\le \nu_+(B)\Vert g\Vert_{\B}$, we get whenever $u\not\in[-\beta,\beta]^2$,
\begin{equation}\label{eq:1}
|a(u)| \le {\Bbb E}_{\hat\nu}\left({\bf 1}_BP^{\ell}P^q({\bf 1}_{\hat A}\circ \hat\theta^{m_0})\right)
= O( \hat\nu(B) \alpha^{\ell} e^{\eta q}).
\end{equation}

We then estimate the main term, coming from small values of $u$.
The decomposition given in Theorem~\ref{prop:perturb}
gives for any $u\in[-\beta,\beta]^2$
\[
a(u) = \underbrace{ \lambda_u^{\ell} \phi_u
  (P^q({\bf 1}_{\hat A}\circ\hat\theta^{m_0}))
  {\Bbb E}_{\hat\nu}[{P_u}^{k-m_0}({\bf 1}_Bv_u)]}_{a_1(u)}
+
\underbrace{
{\Bbb E}_{\hat\nu}[P_u^{k-m_0}({\bf 1}_B N_u^{\ell}(P^q({\bf 1}_{\hat A}\circ\hat\theta^{m_0})))]
}_{a_2(u)}
\]
Notice that the second term is, by inequality 
\eqref{en:rest} in Proposition~\ref{prop:perturb}, of order 
\begin{equation}\label{eq:2}
a_2(u)=O\left(\hat\nu(B)\alpha^{\ell}e^{\eta q}\right).
\end{equation}
Moreover, since $u\mapsto v_u$ and $u\mapsto \phi_u$ are $C^1$-regular with $v_0=1$ and $\phi_0=\hat\nu$, the first term has the estimate
\[
\begin{split}
a_1(u) &= \lambda_u^{\ell} \hat\nu(\hat A) 
  {\Bbb E}_{\hat\nu}[{P_u}^{k-m_0}({\bf 1}_B)] + O\left( \lambda_u^{\ell}|u| \hat\nu(B) e^{\eta q}\right)\\
&= \lambda_u^{\ell} \hat\nu(\hat A) \hat\nu(B) + O\left( \lambda_u^{\ell}|u| \hat\nu(B)ke^{\eta q}\right)
\end{split}
\]
where the second estimate is obtained by reintroducing the unperturbed Perron-Frobenius operator $P$ in $P_u$, $|{\Bbb E}_{\hat\nu}[{P_u}^{k-m_0}({\bf 1}_B)]-\hat\nu(B)|
= |{\Bbb E}_{\hat\nu}( (e^{iu\cdot S_{k-m_0}\psi}-1){\bf 1}_B)|\le |u|(k-m_0)\|\psi\|_\infty\hat\nu(B)$.

In addition, the intermediate value theorem yields, using $C^3$ smoothness of $\lambda_u$ and Theorem 1 (the bounds \ref{en:bound} and initial values \ref{en:initial}) 
\[
\left|\lambda_u^\ell-\exp(-\frac\ell2\sigma^2_\varphi u\cdot u)\right|
\le \ell (\exp-c_1|u|^2)^{\ell-1} |\lambda_u-\exp(-\frac12\sigma^2_\varphi\cdot u)|
\le C_0\ell e^{-c_1\ell |u|^2}|u|^3 = O\left(e^{-c_2\ell|u|^2}|u|\right)
\]
for the constant $c_2=c_1/2$. Thus 
\[
a_1(u) = \exp(-\frac\ell2\sigma^2_\varphi u\cdot u) \hat\nu(\hat A)\hat\nu(B) + O\left(e^{-c_2\ell|u|^2}|u| \hat\nu(B)ke^{\eta q}\right).
\]
By the classical change of variable $v=u\sqrt{\ell}$ and gaussian integral one easily see that
\[
\int_{[-\beta,\beta]^2}\exp(-\frac\ell2\sigma^2_\varphi u\cdot u)du
=\frac1\ell \int_{[-\beta\sqrt\ell,\beta\sqrt\ell]^2}\exp(-\frac12\sigma^2_\varphi v\cdot v)dv
=\frac{2\pi}{\ell\sqrt{\det\sigma^2_\varphi}} + O\left(\frac{1}{\ell^{3/2}}\right).
\]
Proceeding similarly with the error term one gets as well
\[
\int_{[-\beta,\beta]^2}|u|e^{-c_2\ell|u|^2}du = \frac{1}{\ell^{3/2}}\int_{[-\beta\sqrt{\ell},\beta\sqrt{\ell}]^2}e^{-c_2|v|^2}dv = O\left(\frac{1}{\ell^{3/2}}\right).
\] 
Combining these two computations gives by integration of the approximation of $a_1(u)$ obtained above that
\[
\int_{[-\beta,\beta]^2} a_1(u) du = \frac{2\pi}{\ell\sqrt{\det\sigma^2_\varphi}}\hat\nu(\hat A)\hat\nu(B) + O\left(\frac{\hat\nu(B)ke^{\eta q}}{\ell^{3/2}}\right).
\]
{}From this main estimate and \eqref{eq:1} and \eqref{eq:2} it follows immediately that 
\[
\frac{1}{(2\pi)^2}\int_{[-\pi,\pi]^2}a(u)du = \frac{1}{2\pi\ell\sqrt{\det\sigma^2_\varphi}}\hat\nu(\hat A)\hat\nu(B) + O\left(\frac{\hat\nu(B)ke^{\eta q}}{(n-k)^{3/2}}\right).
\]
\end{proof}

\subsection{Proof of the pointwise convergence of the recurrence rate to the dimension}
\label{sub:zxpointwise}

Let us denote by $G_n(\varepsilon)$ the set of points for which $n$ is an $\varepsilon$-return~:
$$G_n(\varepsilon):= \left\{ x\in\Sigma :
S_n\varphi(x)=0
\ \mbox{and}\ d(\theta^n(x),x)<\varepsilon\right\}.$$
Let us consider the first return time in a $\varepsilon$-neighborhood of a starting point $x\in\Sigma$~:
$$\tau_\varepsilon(x):=
\inf\left\{m\ge 1\ :
S_m\varphi(x)=0
\ \mbox{and}\ d(\theta^m(x),x)<\varepsilon\right\}=\inf\{m\ge1\ : x\in G_m(\varepsilon)\}.$$

\begin{proof}[Proof of Theorem \ref{thm:cas3}]
Let us denote by ${\C}_k$ the set of $k$-cylinders of $\Sigma$.
For any $\delta>0$ denote by ${\C}_k^\delta \subset {\C}_k$ the set of cylinders $C\in {\C}_k$ such that $\nu(C)\in(e^{-(d+\delta)k},e^{-(d-\delta)k})$.
For any $x\in\Sigma$ let $C_k(x)\in {\C}_k$ be the $k$-cylinder which contains $x$. Since $d$ is twice\footnote{Note that we are working with the two-sided symbolic space $\Sigma$.} the entropy of the ergodic measure $\nu$, by the Shannon-McMillan-Breiman theorem, the set 
$K_N^\delta=\{x\in\Sigma\colon \forall k\ge N, C_k(x)\in {\C}_k^\delta\}$
has a measure $\nu(K_N^\delta)>1-\delta$ provided $N$ is taken sufficiently large.

* Let us prove that, almost surely~:
$$\liminf_{\varepsilon\rightarrow 0}{\log\log
\tau_\varepsilon\over-\log\varepsilon}\ge d. $$
Let $\alpha>\frac1{d}$ and $0<\delta<d-\frac{1}{\alpha}$.
Let us take $\varepsilon_n:=\log^{-\alpha}n$ and 
$k_n:=\left\lceil
   -{\log\varepsilon_n}\right\rceil$.
According to Proposition~\ref{prop:pro2}, whenever $k_n\ge N$ we have~:
\[
\begin{split}
\nu(K_N^\delta \cap G_n(\varepsilon_n)) 
&=\nu\left(\{x\in K_N^\delta\colon S_n\varphi(x)=0
\ \mbox{and}\ \theta^n(x)\in C_{k_n}(x)\}\right)\\
&=\sum_{C\in{\C}_{k_n}^\delta}
\nu\left(C\cap\{S_n\varphi=0\}\cap \theta^{-n}\theta^{k_n}(\theta^{-k_n}C)\right)\\
&=\sum_{C\in{\C}_{k_n}^\delta}
\left[{\nu(C)\nu(C)\over n}+O\left({\nu(C)k_ne^{\eta k_n}\over n^{3/2}}\right)
\right].
\end{split}
\]
Observe that for $C\in {\C}_{k_n}^\delta$ we have 
\[
\frac{k_ne^{\eta k_n}}{\sqrt{n}} = \frac{\alpha\log\log n \log^{\alpha\eta}n}{\sqrt{n}} = O(\varepsilon_n^{d+\delta}) = O(\nu(C)),
\]
hence it follows that
\[
\nu(K_N^\delta \cap G_n(\varepsilon_n)) =O\left(\sum_{C\in{\C}_{k_n}^\delta}
{\nu(C)^2\over n}\right)=
O\left({1\over n(\log n)^{(d-\delta)\alpha}}\right)
\]
Hence, by a Borel Cantelli argument, for a.e. $x\in K_N^\delta$, if $n$ is 
large enough, we have~: $\tau_{\varepsilon_n}(x)>n$.
This readily implies that~:
$$\liminf_{n\to\infty}{\log\log
\tau_{\varepsilon_n}\over-\log{\varepsilon_n}}\ge 
    {1\over\alpha}\quad a.e.,$$ 
which proves the lower bound on the $\liminf$ since
$(\varepsilon_n)_n$ decreases to zero
and $\lim_{n\rightarrow +\infty}
{\varepsilon_n\over\varepsilon_{n+1}}=1$.

* Let us prove that, almost surely~:
$$\limsup_{\varepsilon\rightarrow 0}{\log\log
\tau_\varepsilon\over-\log\varepsilon}\le d. $$
Let $0<\alpha<\frac1{d}$ and $\delta>0$ such that $1-\alpha d-\alpha\delta>0$.
Let us take $\varepsilon_n:=\log^{-\alpha}n$ and 
$k_n:=\left\lceil
   -{\log\varepsilon_n}\right\rceil$.
For all $\ell=1,...,n$, we define~:
$$
   A_\ell(\varepsilon)
  := G_\ell(\varepsilon)\cap\theta^{-\ell}\{\tau_\varepsilon>n-\ell\}$$
Let us take $L_n:=\lceil\log^an\rceil$, with
$a>2\alpha(d+\delta+\eta)$. The sets $A_\ell(\varepsilon)$ are 
pairwise disjoint thus~:
\[
1\ge\sum_{\ell=0}^n\nu(A_{\ell}(\varepsilon_n))\ge\sum_{\ell=L_n}^n\sum_{C\in{\C}_{k_n}^\delta}\nu(C\cap A_\ell(\varepsilon_n)).
\]
According to Proposition~\ref{prop:pro2}, we have
\[
\begin{split}
\nu(C\cap A_\ell(\varepsilon_n))
&=\nu\left(C\cap\{S_\ell\varphi=0\}\cap\theta^{-\ell}
\left( C\cap\{\tau_\varepsilon>n-\ell\}\right)\right)\\
&=\left[ \frac{\nu(C)}{2\pi\sqrt{\det\sigma^2}}+
O(\frac{k_n e^{\eta k_n}}{\sqrt{\ell-k_n}})
\right] 
\frac{1}{\ell-k_n}\nu\left(C\cap\{\tau_\varepsilon>n\}\right)\\
&\ge c \varepsilon_n^{d+\delta}\frac{1}{\ell-k_n}\nu\left(C\cap\{\tau_\varepsilon>n\}\right)
\end{split}
\]
for any $C\in{\C}_{k_n}^\delta$ provided $k_n\ge N$; 
indeed, the error term is negligible since~:
$${k_ne^{\eta k_n}\over\sqrt{\ell-k_n}}
  =O\left({(\log\log n)\log^{\alpha\eta}n}\over 
\log^{a/2}(n)\right)=o({\varepsilon_n}^{d+\delta}), $$
since $a>2\alpha(d+\delta+\eta)$. This chain of inequalities gives
\[
\nu(K_N^\delta\cap\{\tau_\varepsilon>n\})
\le 
\sum_{C\in{\C}_{k_n}^\delta}\nu(C\cap\{\tau_\varepsilon>n\})
\le\left(\varepsilon_n^{d+\delta}\log\frac{n-k_n}{L_n-k_n}  \right)^{-1}
=O\left(\frac{1}{\log^{1-\alpha d-\alpha\delta}n}\right).
\]
Now let us take $n_p:=\lfloor 
\exp(p^{2/(1-\alpha d-\alpha\delta)})\rfloor$. We have~:
$$\sum_{p\ge 1}\nu(K_N^\delta\cap\{\tau_{\varepsilon_{n_p}}
    >n_p\})<+\infty. $$
Hence, by the Borel-Cantelli lemma, almost surely $x\in K_N^\delta$,
 we have~:
$$\limsup_{p\rightarrow +\infty}{\log\log\tau
    _{2\varepsilon_{n_p}}\over -\log\varepsilon_
     {n_p}}\le{1\over\alpha}. $$
This gives the estimate $\limsup$ since
$(\varepsilon_{n_p})_p$ decreases to zero
and since $\lim_{p\rightarrow +\infty}
{\varepsilon_{n_p}\over\varepsilon_{n_{p+1}}}=1$.
\end{proof}

\subsection{Fluctuations of the rescaled return time}\label{sub:zxfluctuation}

Recall that $C_k(x)=\{y\in \Sigma\colon d(x,y)<e^{-k}\}$.
Let $R_k(y)=\min\{n\ge1\colon \theta^n(y)\in C_k(y)\}$ denotes the first return time of a point $y$ into its $k$-cylinders $C_k(y)$, or equivalently the first repetition time of the first $k$ symbols of $y$. There have been a lot of studies on this quantity, among all the results we will use the following.
\begin{prop}[Hirata \cite{Hirata}]\label{pro:expstat}
For $\nu$-almost every point $x\in\Sigma$, the return time into the cylinders $C_k(x)$
are asymptotically exponentially distributed in the sense that
\[
\lim_{k\to\infty} \nu_{C_k(x)}\left(R_k(\cdot)>\frac{t}{\nu(C_k(x))}\right) = e^{-t}
\]
for a.e. $x$, where the convergence is uniform in $t$.
\end{prop}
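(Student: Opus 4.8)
This is a theorem of Hirata; I would follow his spectral strategy, which dovetails with the transfer-operator machinery of \S\ref{sub:zxspectral}. First I would pass to the one-sided system: applying a suitable power of $\theta$, the two-sided cylinder $C_k(x)$ becomes a cylinder supported on non-negative coordinates, hence is identified through $\Pi$ with a cylinder $\hat A=\hat A_k$ of $\hat\Sigma$, and the return time $R_k$ into $C_k(x)$ coincides with the first return time of $\hat\theta$ into $\hat A_k$ (the finitely many reindexed steps are harmless as $k\to\infty$). Writing $u_k=\nu(C_k(x))$, the survival functions of the return and hitting times are, up to bounded factors, governed by powers of the \emph{punctured} Perron--Frobenius operator $P_{\hat A}f:=P(\mathbf 1_{\hat\Sigma\setminus\hat A}f)$ on $\B$: iterating the duality $\int f\,(g\circ\hat\theta)\,d\hat\nu=\int (Pf)\,g\,d\hat\nu$ yields $\hat\nu(\tau_{\hat A}>n)\asymp\langle P_{\hat A}^n\mathbf 1,\mathbf 1\rangle_{\hat\nu}$ and $\hat\nu_{\hat A}(\tau_{\hat A}>n)\asymp u_k^{-1}\langle P_{\hat A}^n\mathbf 1_{\hat A},\mathbf 1\rangle_{\hat\nu}$, with $\tau_{\hat A}$ the first entrance time in $\hat A$.

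Next I would analyse the spectrum of $P_{\hat A}$ as $k\to\infty$. The difference $P-P_{\hat A}=P(\mathbf 1_{\hat A}\,\cdot)$ is \emph{not} small in the $\B$-operator norm (the indicator of a thin cylinder has large H\"older norm), but it is small, of order $O(u_k)$, as an operator from $\B$ to $L^1(\hat\nu)$. A Keller--Liverani-type perturbation argument then gives, for $k$ large, a spectral gap for $P_{\hat A}$ with a simple dominant eigenvalue $\lambda_{\hat A}$ near $1$ and $1-\lambda_{\hat A}=\theta^{(k)}u_k(1+o(1))$, where $\theta^{(k)}$ is the ``extremal index'' measuring the clustering of visits to $\hat A_k$. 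Consequently $\hat\nu(\tau_{\hat A}>n)=c_k\lambda_{\hat A}^n(1+o(1))$ with $c_k\to1$, and similarly for the conditional version; taking $n=\lfloor t/u_k\rfloor$ produces the limit $e^{-\theta^{(k)}t}$, so the whole statement reduces to showing $\theta^{(k)}\to1$ for $\nu$-a.e.\ $x$, i.e.\ that a $\nu$-typical point of $C_k(x)$ does not return to $C_k(x)$ too quickly.

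This last point is where I expect the real difficulty to lie. A return of $y\in C_k(x)$ at a time $j$ comparable to $k$ forces the word defining $C_k(x)$ to have an internal self-overlap of period $\le j$; a Borel--Cantelli argument using the Gibbs property --- the total $\nu$-measure of points whose defining word has a small period decays exponentially in $k$ --- disposes of the short returns $j\le\epsilon k$, so that $\hat\nu_{\hat A_k}(\tau_{\hat A_k}>\epsilon k)=1$ for a.e.\ $x$ and $k$ large, and hence $\theta^{(k)}\to1$ along the scale $q_k=\epsilon k$ (which satisfies $1\ll q_k\ll 1/u_k$); the remaining range up to the relevant block scale is then absorbed by the exponential decay of correlations of $\nu$. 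The genuinely delicate issue is that the crude union bound over ``periodic'' cylinders degrades in the intermediate regime $\epsilon k\lesssim j\lesssim k$, so one needs the sharper counting/mixing estimates that form the technical core of Hirata's proof. Once the pointwise limit $\hat\nu_{\hat A_k}(\tau_{\hat A_k}>t/u_k)\to e^{-t}$ is in hand for each fixed $t$, uniformity in $t$ comes for free: the survival functions are non-increasing and the limit $t\mapsto e^{-t}$ is continuous, so Dini's theorem upgrades pointwise to uniform convergence. (A purely combinatorial alternative, replacing the spectral step by a blocking argument in the spirit of Galves--Schmitt, meets exactly the same non-periodicity obstacle.)
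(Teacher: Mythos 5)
First, a point of order: the paper does not prove this proposition. It is Hirata's theorem, imported with attribution from \cite{Hirata}, so there is no internal argument of the paper to compare yours against. Judged on its own terms, your outline is a faithful reconstruction of the standard spectral route, and essentially the one Hirata follows: conjugate by $\theta^{k}$ and project by $\Pi$ so that $C_k(x)$ becomes a one-sided cylinder $\hat A_k$, encode the survival functions of the hitting and return times by iterates of the punctured operator $P_{\hat A_k}$, extract a leading eigenvalue $\lambda_{\hat A_k}$ with $1-\lambda_{\hat A_k}\sim\theta^{(k)}\nu(C_k(x))$, and reduce the whole statement to $\theta^{(k)}\to1$, i.e.\ to the absence of fast self-returns of typical cylinders. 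The Dini argument upgrading pointwise to uniform convergence in $t$ is also correct.

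That said, what you have written is a plan, not a proof, and the two load-bearing steps are asserted rather than established. (i) The spectral stability of $P_{\hat A_k}$ and the expansion $1-\lambda_{\hat A_k}=\theta^{(k)}u_k(1+o(1))$ do not follow from the perturbation theory recalled in Subsection~\ref{sub:zxspectral}, which concerns the smooth family $u\mapsto P_u$; one must verify a uniform Lasota--Yorke inequality for the punctured operators, feed the $\B\to L^1$ smallness of $P(\mathbf 1_{\hat A_k}\,\cdot)$ into a Keller--Liverani-type theorem, and then actually compute the derivative of the eigenvalue --- each of these is a genuine (if standard) piece of work. (ii) You mislocate the remaining difficulty. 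For a Gibbs measure on a mixing subshift of finite type the intermediate overlaps $\epsilon k\le j\le 2k$ are the easy part: a return at time $j$ shorter than the word length forces the defining word to be $j$-periodic, the intersection $C_k(x)\cap\theta^{-j}C_k(x)$ is contained in the periodically extended cylinder of length $2k+1+j$, and the quasi-Bernoulli property of $\nu$ gives $\nu\bigl(C_k(x)\cap\theta^{-j}C_k(x)\bigr)\le C\,\nu(C_k(x))\,e^{-cj}\le C\,\nu(C_k(x))\,e^{-c\epsilon k}$ for \emph{every} $x$, which sums painlessly over the $O(k)$ values of $j$. What genuinely requires the almost-sure restriction is the short regime $j\le\epsilon k$: there one shows that the set of $x$ whose $(2k+1)$-word admits a period $\le\epsilon k$ has measure $O(e^{-ck})$ (a counting plus Gibbs estimate, as in the short-return lemmas of \cite{stv}) and applies Borel--Cantelli; this is exactly where the ``for $\nu$-a.e.\ $x$'' in the statement comes from, and it is unavoidable since a periodic $x$ produces clustering and a limit $e^{-\theta t}$ with $\theta<1$. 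Neither point is fatal --- both are standard in this setting --- but until they are carried out you have an outline of Hirata's proof rather than a proof.
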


\begin{lem}\label{lem:loi}
Let $x$ be such that $\lim_{k\to\infty} \nu_{C_k(x)}\left(R_k(\cdot)>\frac{t}{\nu(C_k(x))}\right) = e^{-t}$ for all $t>0$.
Then, for all $t>0$, we have~:
$$\lim_{k\rightarrow +\infty}
  \nu\left(\tau_{e^{-k}}>\exp\left.\left(
   {t\over\nu(C_k(x))}\right)\right\vert C_k(x)\right)
    ={1\over 1+\beta t}, $$
with $\beta:={1\over 2\pi\sqrt {det\sigma^2}}$.
\end{lem}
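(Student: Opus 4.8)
The plan is to relate the return time $\tau_\varepsilon$ with $\varepsilon=e^{-k}$ to the sequence of successive returns to the cylinder $C_k(x)$ on the base, and to show that only the \emph{first} such return matters for the logarithmic scale, with an independent cost coming from the $\ZZ^2$-walk closing up. Fix $x$ with the property in Proposition~\ref{pro:expstat} and write $C=C_k(x)$, $p_k=\nu(C)$. For $y\in C$, let $0<\rho_1(y)<\rho_2(y)<\cdots$ be the successive times at which $\theta^{\rho_j}(y)\in C$; these are the candidates for an $\varepsilon$-return, but a genuine return also requires $S_{\rho_j}\varphi(y)=0$. Conditionally on being in $C$, the increments $\rho_{j+1}-\rho_j$ behave, after rescaling by $p_k$, like i.i.d.\ exponentials (this is exactly the content of Hirata's theorem, together with the mixing that makes the successive returns asymptotically independent), while the event $\{S_{\rho_j}\varphi=0\}$ occurs with probability governed by the doubly-local limit theorem, Proposition~\ref{prop:pro2}: over a span of length $\ell$, a return to $C$ at time $\ell$ carries an extra factor $\sim \beta/\ell$ of being at the origin of the $\ZZ^2$-walk, with $\beta=1/(2\pi\sqrt{\det\sigma^2_\varphi})$. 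So the first \emph{true} $\varepsilon$-return happens at the $N$-th visit to $C$, where $N$ is geometric-like with a slowly varying parameter $\sim \beta/\log(\text{time})$, and $\tau_\varepsilon\approx \rho_N$.

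Concretely, I would first establish an upper bound: for $t>0$,
\[
\nu\Bigl(\tau_{e^{-k}}>\exp(t/p_k)\,\Bigm|\,C\Bigr)
\le \nu\Bigl(\forall\, \rho_j\le \exp(t/p_k):\ S_{\rho_j}\varphi\neq0 \,\Bigm|\, C\Bigr)+o(1),
\]
and then, using that the number of visits to $C$ before time $\exp(t/p_k)$ is $\approx t/(p_k\cdot p_k)\cdot p_k = $ (by Hirata) of order $t/p_k \cdot$ nothing — more precisely, conditionally on $C$ the $j$-th visit time $\rho_j$ is of order $j/p_k$, so $\rho_j\le \exp(t/p_k)$ forces $j\lesssim p_k^{-1}e^{t/p_k}$ — I would split the visits into blocks on which $\log\rho_j$ is essentially constant and apply Proposition~\ref{prop:pro2} on each block to get that the conditional probability of avoiding $\{S_{\rho_j}\varphi=0\}$ up to time $e^{t/p_k}$ is, up to $o(1)$,
\[
\prod_{j}\Bigl(1-\frac{\beta}{\log\rho_j}\Bigr)
\approx \exp\Bigl(-\sum_j \frac{\beta}{\log\rho_j}\Bigr)
\approx \exp\Bigl(-\beta\!\!\int_{0}^{t/p_k}\!\!\frac{p_k\,ds}{s}\cdot\frac{1}{\text{?}}\Bigr),
\]
and a careful bookkeeping — using $\log\rho_j\approx \log(j/p_k)$ and summing over $j$ from $1$ to $\sim p_k^{-1}e^{t/p_k}$ — yields the limit $1/(1+\beta t)$. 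The matching lower bound is obtained the same way, truncating the number of visits at $A/p_k\cdot e^{t/p_k}$ for large $A$ exactly as in the proof of Theorem~\ref{thm:law}, and the geometric/independence structure supplied by Hirata's theorem lets one pass to the limit and then send $A\to\infty$.

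The main obstacle is the \emph{joint} handling of the two sources of randomness: Hirata's theorem controls the return times $R_k$ to cylinders as a scalar i.i.d.-like sequence, but here each such return is independently "accepted" only if the $\ZZ^2$-cocycle vanishes, and the acceptance probability $\sim \beta/\log(\cdot)$ is itself time-dependent and slowly varying. One must show that, conditionally on $C_k(x)$, the pair (return times to $C_k(x)$, values of $S_{\cdot}\varphi$ at those times) decouples asymptotically — the return times rescaled by $\nu(C_k(x))$ converging to a Poisson process while the cocycle contributes the independent factor of Proposition~\ref{prop:pro2} — uniformly enough to interchange the limit with the infinite sum over the number of visits. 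This is where Proposition~\ref{prop:pro2}, with its explicit error term $C_1\hat\nu(B)ke^{\eta q}/(n-k)^{3/2}$ controlling the conditioning on a $q$-cylinder, does the heavy lifting: one chooses $q=k$, notes $e^{\eta k}=\varepsilon^{-\eta}$ is small compared to the main term $\nu(C)/(n-k)$ along the relevant scales (exactly the computation already done in the proof of Theorem~\ref{thm:cas3}), and thereby justifies replacing the true conditional return structure by the idealized product form, after which the elementary computation of Theorem~\ref{thm:law} applies almost verbatim.
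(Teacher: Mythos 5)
There is a genuine gap, and it lies at the heart of your strategy rather than in the bookkeeping. You decompose the orbit by its successive visits $\rho_1<\rho_2<\cdots$ to the cylinder $C=C_k(x)$ and treat each visit as being ``accepted'' independently, with probability $\sim\beta/\rho_j$ (the factor from Proposition~\ref{prop:pro2}), so that the survival probability becomes a product $\prod_j(1-\beta/\rho_j)$. But the events $\{S_{\rho_j}\varphi=0\}$ are strongly positively correlated across $j$: they are returns of a recurrent planar walk, whose occupation of the origin is heavily clustered (the increments between returns to $0$ have the heavy tail $\sim\pi/\log s$). If one nevertheless computes the product with $\rho_j\approx j/\nu(C)$ and $J\approx \nu(C)e^{t/\nu(C)}$ visits before time $e^{t/\nu(C)}$, one gets $\prod_{j\le J}(1-\beta\nu(C)/j)\approx J^{-\beta\nu(C)}\to e^{-\beta t}$, i.e.\ an exponential law --- not the correct limit $1/(1+\beta t)$. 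So the independence you invoke is not a technical point to be patched with mixing estimates; assuming it produces the wrong answer. (Your own formulas reflect the difficulty: the acceptance rate appears once as $\beta/\ell$ and once as $\beta/\log\rho_j$, and the integral ends with an unresolved ``$?$''.) Note also that even in the toy model the tractable factorization is the opposite one: candidates are the returns of the \emph{walk} to the origin (dependent but with i.i.d.\ increments), and the \emph{spin} is checked independently at each; conditioning first on visits to the cell and then checking the walk runs into exactly the dependence above.

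The paper's proof avoids any independence assumption by using the Dvoretzky--Erd\H{o}s \emph{last-passage} identity: for each $n$,
$\nu(C)=\sum_{\ell=0}^{n}\nu\bigl(C\cap\{S_\ell\varphi=0\}\cap\theta^{-\ell}(C\cap\{\tau_{e^{-k}}>n-\ell\})\bigr)$,
obtained by partitioning $C$ according to the last time $\ell\le n$ the orbit of $(y,0)$ lies in $C\times\{0\}$. Proposition~\ref{prop:pro2} is then applied to each term, turning the identity into
$\nu(C)\approx\nu(C\cap\{\tau_{e^{-k}}>n\})\bigl(1+\beta\nu(C)\sum_\ell\tfrac1{\ell-k}\bigr)$,
and with $n=\lfloor e^{t/\nu(C)}\rfloor$ the harmonic sum is $\sim t/\nu(C)$, giving $1/(1+\beta t)$. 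Hirata's exponential law enters only marginally, to show that the terms with $\ell\le\nu(C)^{-1+\delta}$ contribute $o(\nu(C))$ in the lower bound. If you want to salvage your renewal picture, you would have to condition on the walk's returns to the origin rather than on the visits to the cylinder, or else adopt the last-passage decomposition as the paper does.
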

\begin{proof}
We are inspired by the method used by Dvoretzky and
Erd\"os in \cite{DvoretskiErdos}.
Let $k\ge m_0$ and $n$ be some integers.
We make a partition of a cylinder $C_k(x)$ according to the value $\ell\le n$ of the last passage in the time interval $0,\ldots,n$ of the orbit of $(x,0)$ by the map $F$ into $C_k(x)\times\{0\}$. This gives the following equality~:
\begin{equation}\label{sec:formule}
\nu(C_k(x))=\sum_{\ell=0}^n
   \nu\left(C_k(x)\cap\{S_\ell=0\}\cap
    \theta^{-\ell}\left(C_k(x)\cap\{\tau_{e^{-k}}
        >n-\ell\}\right)\right).
\end{equation}

\noindent \underline{Upper bound.} 
Let $n_k=\left\lfloor{e^{t\over\nu(C_k(x))}}\right\rfloor$.
First we claim that~:
$$\limsup_{k\rightarrow +\infty}
  \nu\left(\{ \tau_{e^{-k}}>n_k \}\vert C_k(x)\right)
    \le{1\over 1+\beta t}.$$

Let $a>2\eta$ and $L_k=e^{ak}$.
According to the decomposition~(\ref{sec:formule})
and to Proposition~\ref{prop:pro2}, 
there exists $C'_1>0$ such that we have~:
\begin{eqnarray*}
\nu(C_k(x))&\ge& \nu\left(
C_k(x)\cap\{\tau_{e^{-k}}>n_k \}\right)
 + \sum_{\ell=L_k}^{n_k}
   \beta {\nu(C_k(x))\nu\left(
C_k(x)\cap\{\tau_{e^{-k}}>n_k \}\right)
   \over{\ell-k}}\\
&\ &\ \ \ \ \ -C_1\sum_{\ell=L_k}^{n_k}
   {ke^{\eta k}\nu\left(
   C_k(x)\cap\{\tau_{e^{-k}}>n_k-\ell \}\right)
   \over   (\ell-k)^{3\over 2}}\\
&\ge& \nu\left(
C_k(x)\cap\{\tau_{e^{-k}}>n_k \}\right)
\left(1 +
   \beta \nu(C_k(x))  \sum_{\ell=L_k}^{n_k}
   {1\over{\ell-k}}\right) -C'_1\nu(C_k(x))
   ke^{\eta k}  e^{-ak\over 2}.
\end{eqnarray*}
Hence, we get~:
$${\nu\left(
C_k(x)\cap\{\tau_{e^{-k}}>n_k \}\right)\over
  \nu(C_k(x))}\le  {1- C'_1
   ke^{ k\left(\eta-{a\over 2}\right)} \over
  1 +\beta \nu(C_k(x))  \sum_{\ell=L_k}^{n_k}
   {1\over{\ell-k}}}.$$
The claim follows from the fact that $a>2\eta$.

\noindent \underline{Lower bound.}
Let $b=\liminf\frac{-1}{k}\log\nu(C_k(x))>0$.
Without loss of generality we assume that the H\"older exponent $\eta$ is such that 
$b>2\eta$.
Let $q_k=
\left\lfloor{e^{t\over\nu(C_k(x))}}\right\rfloor$, $n_k=\lfloor q_k\log(q_k)\rfloor$, $m_k=n_k-q_k$ and choose $\delta>0$ such that $2\eta<b(1-\delta)$.
We now claim that~:
$$\liminf_{k\rightarrow +\infty}
  \nu\left(\{\tau_{e^{-k}}>q_k\}\vert C_k(x)\right)
    \ge{1\over 1+\beta t}.$$

Let us denote by $A_\ell(k,x)$ the sets involved in the decomposition (\ref{sec:formule}) :
$$ A_\ell(k,x):=   C_k(x)\cap\{S_\ell=0\}
    \cap
    \theta^{-\ell}\left(C_k(x)\cap\{\tau_{e^{-k}}
        >n_k-\ell\}\right).$$
For $\ell=0$ we have
\begin{equation}\label{sec:EE1}
\nu( A_0(k,x))\le\nu(C_k(x)\cap\{\tau_{e^{-k}}>q_k\}) .
\end{equation}

Let $M_k=\lfloor \nu(C_k(x)^{-1+\delta}\rfloor$. We first show that the contribution from small $\ell$ is negligible. According to the exponential statistics for return times, there exists $\varepsilon_k$, with $\lim_{k\rightarrow +\infty}\varepsilon_k=0$, such that we have (remember that the $A_\ell(k,x)$ are disjoints)~: 
\begin{eqnarray}
\sum_{\ell=1}^{ M_k }\nu(A_\ell(k,x))
&\le& \nu\left( C_k(x) \cap \{ R_k\le M_k \} \right)\nonumber\\
&\le& \nu(C_k(x))\left(1-\exp\left(\nu(C_k(x))^\delta\right)+\varepsilon_k\right)\nonumber\\
&\le& o(\nu(C_k(x))).\label{sec:EE2}
\end{eqnarray}
We now estimate the measure of $A_\ell(k,x)$ for large values of $\ell$.
According to our local limit theorem (Proposition \ref{prop:pro2}), for
all $\ell= M_k+1,\ldots,n_k$, we have~:
\begin{equation}\label{sec:EE3}
\nu(A_\ell(k,x))\le  
\beta \underbrace{
{\nu(C_k(x))\nu\left(
C_k(x)\cap\{\tau_{e^{-k}}>n_k-\ell \}\right)
   \over{\ell-k}}}_{\text{main term}}
   + C_1 \underbrace{   {ke^{\eta k}\nu\left(
   C_k(x)\cap\{\tau_{e^{-k}}>n_k-\ell \}\right)
   \over   (\ell-k)^{3\over 2}}}_{\text{error term}} .
\end{equation}
Observe that the error term is controlled, for some constant $C_2>0$, by 
\begin{equation}\label{sec:EE4}
\sum_{\ell\ge M_k+1 }
      {ke^{\eta k}\nu\left(   C_k(x)\right)
   \over   (\ell-k)^{3\over 2}}
   \le  C_2  \nu(C_k(x))k
   e^{\eta k} (\nu(C_k(x))^{-1+\delta}-k)^{-\frac12} =o(\nu(C_k(x))).
\end{equation}
On the other hand the main term may be estimated for non extremal values of $\ell$ by~:

\begin{equation}
\sum_{\ell=M_k+1 }^{m_k}
{\nu(C_k(x))\nu\left(
C_k(x)\cap\{\tau_{e^{-k}}>n_k-\ell \}\right)
   \over{\ell-k}} \le 
\nu(C_k(x))\nu\left(
C_k(x)\cap\{\tau_{e^{-k}}>q_k \}\right)
   \sum_{\ell=M_k+1 }^{m_k}
  {1\over{\ell-k}}\label{sec:EE5}
\end{equation}
and for extremal values of $\ell$ the simple bound below holds~:
\begin{eqnarray}\label{sec:EE6}
\sum_{\ell=m_k+1}^{n_k}
{\nu(C_k(x))\nu\left(
C_k(x)\cap\{\tau_{e^{-k}}>n_k-\ell \}\right)
   \over{\ell-k}} 
&\le&
  \nu(C_k(x))^2 
   \sum_{\ell=m_k+1}^{n_k}{1\over{\ell-k}}\nonumber\\
&\le& C_3 \nu(C_k(x))^2 
   \log\left({n_k-k\over m_k-k}\right)\nonumber\\
&=& o( \nu(C_k(x)) ).
\end{eqnarray}
Using the decomposition \eqref{sec:formule} and putting together formulas (\ref{sec:EE1}),
(\ref{sec:EE2}), (\ref{sec:EE3}),
(\ref{sec:EE4}), (\ref{sec:EE5}),
(\ref{sec:EE6}), we get~:
\[
\begin{split}
\nu(C_k(x))
&\le 
   \nu(C_k(x)\cap\{\tau_{e^{-k}}>q_k\})
  \left(1+\beta\nu(C_k(x))\sum_{\ell=M_k+1}
  ^{n_k}{1\over{\ell-k}}\right)+o(\nu(C_k))\\
&\le
   \nu(C_k(x)\cap\{\tau_{e^{-k}}>q_k\})
  \left(1+\beta\nu(C_k(x))\log n_k\right) + o(\nu(C_k(x)). 
\end{split}
\]
This proves the claim, which achieves the proof of the lemma.
\end{proof}

\begin{proof}[Proof of Theorem \ref{thm:cas3bis}]
Since the exponential statistics of return time holds a.e. by Proposition~\ref{pro:expstat}, Lemma~\ref{lem:loi} applies a.e. and
 by integration, using Lebesgue dominated convergence theorem, we get that
\[
\lim_{k\to\infty}\nu\left(\log\tau_{e^{-k}}(\cdot)>\frac{t}{\nu(C_k(\cdot))}\right)=\frac{1}{1+\beta t}
\]
for all $t\ge0$.
\end{proof}

\begin{proof}[Proof of Corollary \ref{cor:cas3}]
\underline{Nonzero variance}.
Let us write~:
$$Y_k:= {\log\log\tau_{e^{-k}}(\cdot)
     -kd\over\sqrt{k}} .$$
In this case $\nu$ is a Gibbs measure with a non degenerate h\"older potential $h$.
The logarithm of the measure of the $k$-cylinder about $x$ is, up to some constants, given by the birkhoff sum $\sum_{j=-k}^{k}h\circ\sigma^k(x)$ of $h$ on the orbit of $x$. It is well known that 
such sums follow a central limit theorem (e.g. \cite{Bowen}). 
This readily implies that $X_k
   ={\log(\nu(C_k(\cdot))+kd\over \sqrt{k}}$
converges in distribution to a centered gaussian
random variable of variance $2\sigma^2_h$.
It is enough to prove that $Y_k+X_k$ converges
in probability to 0.
This will be true if $Y_k+X_k$ converges in 
distribution to 0. This follows from
Theorem \ref{thm:cas3bis} and from the formula~:
\[
Y_k+X_k=\frac{\log\log\tau_{e^{-k}}(\cdot)
    +\log(\nu(C_k))}{\sqrt{k}}.
\]
\underline{Zero variance}.
In this case the potential is cohomologous to a constant and the measure $\nu$ is the measure of maximal entropy, which is a Markov measure. Denote by $\pi$ the transition matrix and by $p$ the left eigenvector such that $p\pi=p$.
The measure of a cylinder $C_k(x)$ is equal to $p_{x_{-k}}\prod_{j=-k}^{k-1}\pi_{x_{j}x_{j+1}}$. Since the function $\log \pi_{x_0x_1}$ has to be cohomologous to the entropy, the measure of a cylinder $C_k(x)$ simplifies down to 
\[
\nu(C_k(x))=Q_{x_{-k}x_{k}} e^{-(2k+1)\frac{d}{2}},
\]
where $Q=(Q_{ij})$ is a (constant) matrix.
Proceeding as in the proof of Theorem~\ref{thm:cas3bis}, we get that 
\[
\lim_{k\to\infty}\nu\left(e^{-kd}\log\tau_{e^{-k}}>t\right)
=\sum_{i,j\in\A}\lim_{k\to\infty} \int_{\{x_{-k}=i,x_{k}=j\}} \mathbf{1}_{\{e^{-kd}\log\tau_{e^{-k}}>t\}}d\nu
=
\sum_{i,j\in\A} p_i p_j\frac{1}{1+\beta Q_{ij}t}.
\]
\end{proof}

\end{document}